\documentclass[10pt]{article}

\usepackage[T1]{fontenc}
\usepackage{lmodern}
\usepackage{microtype}
\usepackage[margin=0.9in]{geometry}
\usepackage{amsmath,amssymb,amsthm}
\usepackage{mathrsfs}
\usepackage{tikz}
\usetikzlibrary{arrows.meta}
\usepackage[hidelinks]{hyperref}
\hypersetup{
 pdftitle={An Ellipse Criterion for Exact Nonuniqueness in the Planar Interior Radon Problem},
 pdfauthor={Christian H{\"a}gg},
 pdfsubject={The interior problem for the planar Radon transform},
 pdfkeywords={Radon transform, interior problem, restricted data, convex geometry, ellipse}
}

\newcommand{\R}{\mathbb R}
\newcommand{\C}{\mathbb C}
\newcommand{\T}{\mathbb T}
\newcommand{\Rad}{\mathcal R}
\newcommand{\supp}{\operatorname{supp}}
\newcommand{\co}{\operatorname{co}}
\newcommand{\tr}{\operatorname{tr}}
\newcommand{\ran}{\operatorname{ran}}
\newcommand{\dist}{\operatorname{dist}}
\newcommand{\Real}{\operatorname{Re}}
\newcommand{\Imag}{\operatorname{Im}}

\newtheorem{theorem}{Theorem}
\newtheorem{lemma}[theorem]{Lemma}
\newtheorem{proposition}[theorem]{Proposition}
\newtheorem{corollary}[theorem]{Corollary}
\theoremstyle{remark}
\newtheorem{remark}[theorem]{Remark}

\title{An Ellipse Criterion for Exact Nonuniqueness\\ in the Planar Interior Radon Problem}
\author{Christian H\"agg\\
Department of Mathematics, Stockholm University\\
\texttt{hagg@math.su.se}}
\date{}

\begin{document}
\maketitle

\begin{abstract}
We characterize exact nonuniqueness in the planar interior Radon problem for arbitrary pairs of open convex sets. There exists a nonzero smooth function compactly supported in the first set whose integral over every line meeting the second set vanishes if and only if an ellipse contains the closure of the second set and is compactly contained in the first set. The same criterion holds for a nonzero $L^1$ function whose essential support is compactly contained in the first set and whose Radon transform vanishes almost everywhere on those lines. For concentric open squares, the criterion gives the sharp threshold $1/\sqrt2$ for the inner-to-outer half-side ratio. This yields counterexamples to Conjecture~1.2 of \cite{BomanEllipsoidII} and to Theorem~40.1 of \cite{BomanIPMS}.
\end{abstract}

The interior Radon transform is a classical restricted-data problem; see \cite{Maass,Natterer} and, for broader background on limited-data tomography, \cite{KrishnanQuinto}. It asks whether a function can be recovered from its integrals over the lines meeting a prescribed subdomain. In the two-domain formulation considered here, exact nonuniqueness occurs when a nonzero function supported in one prescribed domain has zero line integrals over every line meeting the other prescribed domain.

For concentric disks such examples are classical \cite[Section~VI.4]{Natterer}, and affine covariance gives the same conclusion whenever an ellipse can be placed between the two domains. The question arose in connection with support and regularity results for Radon transforms \cite{BomanEllipsoidI,BomanEllipsoidII,BomanRegularity}. Universal nonuniqueness for every nested convex pair was conjectured in \cite[Conjecture~1.2]{BomanEllipsoidII}, and the centrally symmetric case was asserted in \cite[Theorem~40.1]{BomanIPMS}. Our main result gives the exact geometric criterion in the plane: the intervening ellipse is not merely sufficient, but necessary, and it need not share a prescribed center with either domain. In particular, Corollary~\ref{cor:squares} and Corollary~\ref{cor:closed} provide concentric-square counterexamples to those assertions. Remark~\ref{rem:boman} isolates the parameter-dependence in the argument for \cite[Theorem~40.1]{BomanIPMS}.

Let $\mathbb S^1=\{\omega\in\R^2:|\omega|=1\}$ carry arc-length measure $d\omega$, and give $\mathbb S^1\times\R$ the product measure $d\omega\,dp$. For $\omega=(\omega_1,\omega_2)$, let $\omega^\perp=(-\omega_2,\omega_1)$. For $\omega\in \mathbb S^1$ and $p\in\R$, write
\[
 L(\omega,p)=\{x\in\R^2:x\cdot\omega=p\},
 \qquad
 \Rad u(\omega,p)=\int_{L(\omega,p)}u\,ds,
\]
where $ds$ denotes Euclidean arc length. If $u\in L^1(\R^2)$, Fubini's theorem in the coordinates $x=p\omega+t\omega^\perp$ defines $\Rad u$ as an $L^1(\mathbb S^1\times\R)$-class. For such $u$, $\supp u$ is the complement of the largest open set on which $u=0$ almost everywhere; for continuous $u$, this is the usual closed support. We write $\Omega_0\Subset\Omega_1$ if $\overline{\Omega_0}$ is compact and contained in $\Omega_1$; thus $C_c^\infty(D)=\{u\in C^\infty(\R^2):\supp u\Subset D\}$. For a set $K\subseteq\R^2$, let $\co(K)$ denote its convex hull. We write $\T=\{q\in\C:|q|=1\}$, and for $r>0$ set $\mathbb D_r=\{x\in\R^2:|x|<r\}$.

For a complex matrix or a bounded operator $T$ on a complex Hilbert space, we set
\[
 \Real T=\frac{T+T^*}{2},
 \qquad
 \Imag T=\frac{T-T^*}{2i}.
\]
All Hilbert-space inner products are linear in the first variable. For a complex Hilbert space $\mathcal H$, $\mathcal B(\mathcal H)$ denotes the algebra of bounded linear operators on $\mathcal H$; $\sigma(\cdot)$ denotes the spectrum, $\ominus$ the orthogonal difference of closed subspaces, and $\simeq$ unitary equivalence. A function $u$ on $\R^2$ is \emph{centrally symmetric about} $c\in\R^2$ if $u(c+x)=u(c-x)$ for every $x$ (for almost every $x$ when $u$ is an $L^1$-class).

For a nonempty bounded convex set $K$, define its projection endpoints by
\begin{equation}\label{eq:endpoints}
 a_K(\omega)=\inf_{x\in K}x\cdot\omega,
 \qquad
 b_K(\omega)=\sup_{x\in K}x\cdot\omega.
\end{equation}
Thus $b_K$ is the support function of $K$ and $a_K(\omega)=-b_K(-\omega)$. We use $a_{\overline K}=a_K$ and $b_{\overline K}=b_K$ for nonempty bounded $K$. For nonempty compact convex sets $K_-$ and $K_+$, the standard support-function criterion may be written as
\begin{equation}\label{eq:support-inclusion}
 K_-\subseteq K_+
 \quad\Longleftrightarrow\quad
 a_{K_+}\le a_{K_-}\le b_{K_-}\le b_{K_+}.
\end{equation}

By an \emph{ellipse} we mean a closed, nondegenerate elliptic disk
\[
 E=c+Q^{1/2}\overline{\mathbb D_1},
\]
where $c\in\R^2$ and $Q$ is a real positive definite $2\times2$ matrix. One has
\begin{equation}\label{eq:ellipse-endpoints}
 a_E(\omega)=c\cdot\omega-(\omega^{\mathsf T}Q\omega)^{1/2},
 \qquad
 b_E(\omega)=c\cdot\omega+(\omega^{\mathsf T}Q\omega)^{1/2}.
\end{equation}
We call $c$ the center of $E$.

\begin{theorem}[Ellipse criterion]\label{thm:main}
Let $D_0,D\subseteq\R^2$ be open convex sets. The following are equivalent.
\begin{enumerate}
\item[(i)] There is a nonzero $u\in C_c^\infty(D)$ such that
\begin{equation}\label{eq:smooth-vanishing}
 \Rad u(\omega,p)=0
 \quad\text{whenever }L(\omega,p)\cap D_0\ne\varnothing.
\end{equation}
\item[(ii)] There is a nonzero $u\in L^1(\R^2)$ with $\supp u\Subset D$ such that
\begin{equation}\label{eq:ae-vanishing}
 \Rad u(\omega,p)=0
 \quad\text{for almost every }(\omega,p)\text{ with }
 L(\omega,p)\cap D_0\ne\varnothing.
\end{equation}
\item[(iii)] There is an ellipse $E$ such that
\begin{equation}\label{eq:ellipse-sandwich}
 \overline{D_0}\subseteq E\Subset D.
\end{equation}
\end{enumerate}
Each condition forces $D_0\Subset D$.
If (iii) holds with center $c$, the function in (i) may be chosen real-valued, centrally symmetric about $c$, and strictly negative on an open neighborhood of $E$; in particular, $E\subseteq\supp u$.
\end{theorem}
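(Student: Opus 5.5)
We prove (iii)$\Rightarrow$(i)$\Rightarrow$(ii)$\Rightarrow$(iii). The step (i)$\Rightarrow$(ii) is immediate: a smooth compactly supported $u$ lies in $L^1$, and pointwise vanishing implies almost-everywhere vanishing. The side claim that each condition forces $D_0\Subset D$ follows from (iii), since $\overline{D_0}\subseteq E\Subset D$. So the substance lies in the two remaining implications.

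For (iii)$\Rightarrow$(i) we use affine covariance. If $A x=c+Q^{1/2}x$, then $u\mapsto u\circ A^{-1}$ maps lines to lines and rescales line integrals by positive factors, so we may assume $E=\overline{\mathbb D_1}$, $c=0$. Since $E\Subset D$ and $E$ is compact, there is $R>1$ with $\overline{\mathbb D_R}\subseteq D$. We then take the classical radial example: for radial $u(x)=f(|x|)$,
\[
\Rad u(\omega,p)=2\int_{|p|}^\infty \frac{f(r)\,r\,dr}{\sqrt{r^2-p^2}},
\]
an Abel transform. We choose $f\in C_c^\infty([0,R))$, smooth and even near $0$, with $f<0$ on $[0,1+\delta)$, and with the Abel transform vanishing for $|p|<1$; this can be arranged to vanish for $|p|\le 1+\delta'$ by working with a slightly larger disk. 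Such $f$ is obtained by prescribing $g(p)=\Rad u(\omega,p)$ to be a smooth even function supported in $1+\delta'\le|p|\le R'$ and inverting the Abel transform by $f(r)=-\frac1\pi\int_r^\infty g'(p)(p^2-r^2)^{-1/2}\,dp$. Taking $g\ge0$ with a single bump makes $f$ negative inside the bump region, as in Natterer's construction. Any line meeting $D_0\subseteq E$ has $|p|<1$, so $\Rad u=0$ there. Central symmetry about $c$ is automatic from radiality, and $u<0$ near $E$ gives $E\subseteq\supp u$.

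For (ii)$\Rightarrow$(iii), the main obstacle, we argue as follows. Let $u$ be as in (ii) and $K=\co(\supp u)$, a compact convex subset of $D$. We first show $\overline{D_0}\subseteq K$; otherwise a support-line argument via \eqref{eq:support-inclusion} together with the hole theorem shows $u=0$. Then we use the moment conditions: for each $k$, $\int \Rad u(\omega,p)p^k\,dp$ is a homogeneous polynomial of degree $k$ in $\omega$. Since $\Rad u(\omega,\cdot)$ is supported in $[a_K,b_K]\setminus(a_{D_0},b_{D_0})$, we obtain two pieces, one on each side, whose moments must combine into trigonometric polynomials. The plan is to encode this as an operator-theoretic statement: the data on each side define a pair of operators or a compressed multiplication operator whose numerical range, by the Toeplitz–Hausdorff and elliptical range theorem for $2\times2$ compressions, is an ellipse. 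That ellipse must separate $\overline{D_0}$ from the boundary of $K$, which forces an ellipse $E$ with $\overline{D_0}\subseteq E\subseteq K\Subset D$. This is where the paper's $\Real$, $\Imag$, $\sigma$, and $\ominus$ machinery will enter, and this step is the one we expect to be genuinely hard.
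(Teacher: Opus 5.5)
Your implications (iii)$\Rightarrow$(i) and (i)$\Rightarrow$(ii) are correct and follow the paper: you normalize $E$ affinely to $\overline{\mathbb D_1}$, then take the Abel-inverted radial profile of a nonnegative even bump $g$ supported in an annulus, which is negative inside that annulus. The gap is (ii)$\Rightarrow$(iii). There you give a list of hoped-for ingredients rather than an argument, and both places where the real work lies are missing.

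\emph{The operator is never constructed.} You need a bounded $\mathbf A$ on a nonzero Hilbert space with $\sigma(\Real(e^{-i\theta}\mathbf A))\subseteq[a_1(\theta),b_1(\theta)]\setminus(a_0(\theta),b_0(\theta))$ for every $\theta$. The ``data on each side'' cannot be treated separately: without a parity assumption, the piece to the left of $D_0$ at $\theta$ is the piece to the right of $D_0$ at $\theta+\pi$. The paper handles this as follows.
\begin{enumerate}
\item It passes to unoriented lines via $q=e^{2i\theta}$, $z=pe^{i\theta}$. The moment conditions then say that the transferred measure on the compact set $X(K_0,K_1)\subseteq\{z=q\overline z\}$ annihilates $z^jq^\ell$ for all $\ell\ge1$.
\item This forces $q$ to be non-invertible in the uniform algebra generated by $q$ and $z$, which yields a representing probability measure $\mu$ for a character with $\chi(q)=0$.
\item Multiplication by $q$ is then a pure isometry on the $L^2(\mu)$-closure of the polynomials. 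The relation $z=q\overline z$ turns the Wold model into $M_z\simeq M_{A+qA^*}$.
\item Approximate eigenvectors place $\sigma(e^{-i\theta}A+e^{i\theta}A^*)$ inside the fiber $X_\theta$, and one takes $\mathbf A=2A$.
\end{enumerate}

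\emph{Numerical ranges cannot produce the sandwich, even given $\mathbf A$.} Toeplitz--Hausdorff and the elliptical range theorem control $\mathbf A$ only from outside, namely $\overline{W(\mathbf A)}\subseteq K_1$. A $2\times2$ compression of $\mathbf A$ has no reason to contain $\overline{D_0}$ in its numerical range. The gap in $(a_0,b_0)$ is a property of the spectra of $\Real(e^{-i\theta}\mathbf A)$, not of $W(\mathbf A)$. Your sentence ``that ellipse must separate $\overline{D_0}$ from the boundary of $K$'' is exactly what has to be proved. The paper's mechanism is:
\begin{enumerate}
\item Pick $z_*\in D_0$, so that $\Real(e^{-i\theta}(\mathbf A-z_*))$ is invertible for every $\theta$.
\item Riesz projections of $H_0^{-1}H_1$ onto the two half-planes give a common neutral subspace $\mathcal M$. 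Hence $\mathbf A$ has a corner block $z_*I$, and the off-diagonal block $Y_\theta$ of $\Real(e^{-i\theta}\mathbf A)$ is boundedly invertible for every $\theta$.
\item Build by hand $C=\begin{pmatrix}z_*&\beta\\ \gamma&d\end{pmatrix}$, with $w\in\mathcal M^\perp$ a unit vector and $d=\langle\mathbf A_{22}w,w\rangle$. Choose $\beta,\gamma$ so that the off-diagonal entry of $\Real(e^{-i\theta}C)$ has modulus $\|Y_\theta w\|$ for all $\theta$ simultaneously. In general this is not the compression of $\mathbf A$ to $\mathrm{span}\{m,w\}$ for any $m\in\mathcal M$.
\item A Schur-complement sign comparison then gives $a_1\le\lambda_-\le a_0<b_0\le\lambda_+\le b_1$.
\end{enumerate}

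\emph{The preliminary claim $\overline{D_0}\subseteq K$ ``by the hole theorem'' does not follow.} The support theorem needs vanishing on all lines missing some compact convex set, and your data do not supply this. Take $K$ a square and $D_0$ a sufficiently small disk centered at an edge midpoint. For every compact convex $C\subsetneq K$, some vertex $v$ of $K$ lies outside $C$. A line cutting off a tiny corner of $K$ at $v$ then misses $C$ and misses $D_0$, and nothing is known about $\Rad u$ there. In the paper this inclusion is an output of the bounded case, which gives $\overline{K_0}\subseteq E\subseteq K_1$ directly. Unbounded $D_0$ is handled by applying that case to small disks inside $D_0$.
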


\begin{remark}[Connected data sets]\label{rem:connected}
The theorem also holds when $D_0$ is connected and not contained in a line, while $D$ remains open and convex. Indeed, connectedness implies that $D_0$ and $\co(D_0)$ meet exactly the same lines. The latter has nonempty interior and $\overline{\operatorname{int}\co(D_0)}=\overline{\co(D_0)}$. Apply necessity to this interior and Proposition~\ref{prop:sufficiency} to the resulting ellipse. Thus $D_0$ may, for example, be a circle or a polygonal boundary.
\end{remark}

When $\varnothing\ne D_0\Subset D$ and $D$ is bounded, the geometric condition is a convex feasibility problem in five real variables, with constraints indexed by $\mathbb S^1$; see Figure~\ref{fig:criterion}.

\begin{figure}[t]
\centering
\begin{tikzpicture}[
 line cap=round, line join=round,
 declare function={
  hD(\t)=3.0+0.30*cos(\t)-0.10*sin(\t)+0.22*cos(2*\t)+0.10*sin(2*\t)
         +0.07*cos(3*\t)-0.05*sin(3*\t);
  dhD(\t)=-0.30*sin(\t)-0.10*cos(\t)-0.44*sin(2*\t)+0.20*cos(2*\t)
          -0.21*sin(3*\t)-0.15*cos(3*\t);
  hI(\t)=1.15+0.42*cos(\t)-0.28*sin(\t)+0.16*cos(2*\t)+0.07*sin(2*\t)
         -0.05*cos(3*\t)+0.04*sin(3*\t);
  dhI(\t)=-0.42*sin(\t)-0.28*cos(\t)-0.32*sin(2*\t)+0.14*cos(2*\t)
          +0.15*sin(3*\t)+0.12*cos(3*\t);
 }]
\path[save path=\pathD]
 plot[domain=0:357,samples=120,smooth cycle]
 ({hD(\x)*cos(\x)-dhD(\x)*sin(\x)},{hD(\x)*sin(\x)+dhD(\x)*cos(\x)});
\path[save path=\pathI]
 plot[domain=0:357,samples=120,smooth cycle]
 ({hI(\x)*cos(\x)-dhI(\x)*sin(\x)},{hI(\x)*sin(\x)+dhI(\x)*cos(\x)});
\fill[black!8, use path=\pathD];
\fill[blue!10] (0.30,-0.15) ellipse[x radius=2.05, y radius=1.30, rotate=25];
\fill[blue!22, use path=\pathI];
\foreach \x/\y in {-2.85/-0.45, -1.6375/-0.6467, -0.94/-0.30,
                   1.68/-0.02, 2.2375/0.3467, 3.59/-0.05}
 \draw[gray!70, densely dotted, line width=0.4pt] (\x,\y) -- (\x,-3.55);
\draw[-{Stealth[length=5pt]}, line width=0.5pt]
 (-3.35,-3.55) -- (4.10,-3.55)
 node[above right, inner sep=2pt] {\footnotesize $p=x\cdot\omega$};
\foreach \x/\l in {-2.85/{a_D}, -1.6375/{a_E}, -0.94/{a_{D_0}},
                   1.68/{b_{D_0}}, 2.2375/{b_E}, 3.59/{b_D}}
 \draw[line width=0.5pt] (\x,-3.48) -- (\x,-3.62)
   node[below, inner sep=2.5pt] {\scriptsize $\l$};
\draw[black, line width=0.5pt, use path=\pathD];
\draw[blue!45!black, line width=0.7pt]
 (0.30,-0.15) ellipse[x radius=2.05, y radius=1.30, rotate=25];
\draw[black, line width=0.5pt, use path=\pathI];
\draw[red!55!black, densely dashed, line width=0.5pt]
 (-3.10,1.74) -- (3.093,-2.6236);
\node[red!55!black, above left, inner sep=1pt] at (-3.10,1.74)
 {\footnotesize $L(\omega',p')$};
\fill[blue!45!black] (0.30,-0.15) circle (1.4pt);
\node[blue!45!black, below right, inner sep=1pt] at (0.30,-0.15) {\footnotesize $c$};
\node at (-2.05,-1.75) {\footnotesize $D$};
\node[blue!45!black] at (1.55,1.02) {\footnotesize $E$};
\node at (-0.34,-0.72) {\footnotesize $D_0$};
\end{tikzpicture}
\caption{The ellipse criterion of Theorem~\ref{thm:main}: an ellipse $E$ centered at $c$ satisfies $\overline{D_0}\subseteq E\Subset D$. For the direction $\omega$ of the horizontal projection axis, the dotted guides give $a_D(\omega)<a_E(\omega)\le a_{D_0}(\omega)<b_{D_0}(\omega)\le b_E(\omega)<b_D(\omega)$, the interval sandwich used in Remark~\ref{rem:feasibility}. The dashed line is $L(\omega',p')$ for a second direction $\omega'$; it meets $D_0$, so every witness $u$ in condition~\textup{(i)} has vanishing line integral over it, $\Rad u(\omega',p')=0$.}
\label{fig:criterion}
\end{figure}

\begin{remark}[Five-variable feasibility]\label{rem:feasibility}
Suppose that $\varnothing\ne D_0\Subset D$ and $D$ is bounded, and write
\[
 a_0=a_{D_0},\quad b_0=b_{D_0},\quad
 a_1=a_D,\quad b_1=b_D.
\]
Condition~(iii) of Theorem~\ref{thm:main} is equivalent to the existence of $c\in\R^2$ and a real symmetric matrix $B$ such that, for every $\omega\in \mathbb S^1$, the monic quadratic
\begin{equation*}
 F_\omega(p)=p^2-2(c\cdot\omega)p+\omega^{\mathsf T}B\omega
\end{equation*}
satisfies
\begin{equation}\label{eq:linear-feasibility}
 F_\omega(a_1(\omega))>0,
 \qquad F_\omega(a_0(\omega))\le0,
 \qquad F_\omega(b_0(\omega))\le0,
 \qquad F_\omega(b_1(\omega))>0.
\end{equation}
Indeed, if $E=c+Q^{1/2}\overline{\mathbb D_1}$, take $B=cc^{\mathsf T}-Q$; then $F_\omega$ vanishes at the two endpoints in \eqref{eq:ellipse-endpoints}, and \eqref{eq:linear-feasibility} is exactly the interval sandwich
\[
 [a_0(\omega),b_0(\omega)]
 \subseteq[a_E(\omega),b_E(\omega)]
 \subseteq(a_1(\omega),b_1(\omega)).
\]
Conversely, suppose \eqref{eq:linear-feasibility} holds. Since $D_0$ is nonempty and open and $D_0\Subset D$,
\[
 a_1(\omega)<a_0(\omega)<b_0(\omega)<b_1(\omega)
 \qquad(\omega\in \mathbb S^1).
\]
Because $F_\omega$ is monic and nonpositive at the distinct points $a_0(\omega)$ and $b_0(\omega)$, it has two distinct real roots
\[
 r_-(\omega)<r_+(\omega).
\]
Set
\[
 Q:=cc^{\mathsf T}-B.
\]
Then
\[
 F_\omega(p)=(p-c\cdot\omega)^2-\omega^{\mathsf T}Q\omega,
\]
and the existence of two distinct roots gives $\omega^{\mathsf T}Q\omega>0$ for every $\omega\in \mathbb S^1$. Thus $Q$ is positive definite, and $r_-(\omega),r_+(\omega)$ are the projection endpoints of
\[
 E=c+Q^{1/2}\overline{\mathbb D_1}.
\]
The signs in \eqref{eq:linear-feasibility}, together with the strict ordering above, give
\[
 a_1(\omega)<r_-(\omega)\le a_0(\omega)<b_0(\omega)
 \le r_+(\omega)<b_1(\omega).
\]
The support-function criterion \eqref{eq:support-inclusion} therefore yields
\[
 \overline{D_0}\subseteq E\subseteq\overline D.
\]
If $E$ met $\partial D$, a supporting line to $\overline D$ at a point of contact would contradict one of the strict outer endpoint inequalities. Hence $E\subseteq D$, and compactness gives $E\Subset D$. Thus \eqref{eq:ellipse-sandwich} holds. Each inequality in \eqref{eq:linear-feasibility} is affine in the five real variables consisting of the two entries of $c$ and the three independent entries of $B$.
\end{remark}

For $D=(-1,1)^2$ and $D_0=(-\delta,\delta)^2$, the criterion gives the sharp threshold $\delta<1/\sqrt2$; see Corollary~\ref{cor:squares}. Sufficiency is obtained by transporting the classical disk construction through an affine map. Necessity has four stages. First, the coordinates
\[
 q=e^{2i\theta},\qquad z=pe^{i\theta}
\]
identify the unoriented line $(\theta,p)\sim(\theta+\pi,-p)$ without imposing parity on the unknown function. The classical Helgason--Ludwig moment conditions \cite{Helgason,Ludwig} then give an annihilating measure on a compact subset of the M\"obius real line bundle $z=q\overline z$. Second, a uniform-algebra argument produces a representing probability measure. Third, the coordinate multipliers generate a pure isometry and hence a vector-valued Hardy model in which the second coordinate has the form $A+qA^*$ for a bounded operator $A$ on an auxiliary Hilbert space (Lemma~\ref{lem:hardy-model}). Finally, a spectral-gap theorem (Theorem~\ref{thm:operator-ellipse}) for the self-adjoint pencil $\Real(e^{-i\theta}\mathbf A)$, applied to $\mathbf A=2A$, extracts a $2\times2$ matrix whose projection intervals form the required ellipse, thereby forcing the geometric nesting.

\section{Sufficiency}

We first record a self-contained version of the classical disk construction (cf.\ \cite[Section~VI.4]{Natterer}).

\begin{lemma}[Concentric disks]\label{lem:disks}
Let $0<r_{\mathrm{in}}<r_{\mathrm{out}}$. There is a nonzero real radial function $f\in C_c^\infty(\mathbb D_{r_{\mathrm{out}}})$ such that
\[
 \Rad f(\omega,p)=0\qquad (|p|\le r_{\mathrm{in}}),
 \qquad \overline{\mathbb D_{r_{\mathrm{in}}}}\subseteq\supp f,
\]
and $f$ is strictly negative on an open neighborhood of $\overline{\mathbb D_{r_{\mathrm{in}}}}$.
\end{lemma}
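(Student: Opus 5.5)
We construct $f$ explicitly as a radial function and reduce the vanishing of line integrals to an Abel-type integral equation in one variable. For radial $f(x)=F(|x|)$ the transform depends only on $|p|$:
\[
 \Rad f(\omega,p)=2\int_{|p|}^{\infty}\frac{F(r)\,r\,dr}{\sqrt{r^2-p^2}}.
\]
It therefore suffices to find $F\in C^\infty([0,\infty))$ with the following properties: $F$ is smooth and even near $0$ (so that $f$ is smooth); $F$ is supported in $[0,r_{\mathrm{out}}-\varepsilon]$; $F$ is strictly negative on $[0,r_{\mathrm{in}}+\eta]$; and the Abel transform $A F(p)=\int_p^\infty F(r)r(r^2-p^2)^{-1/2}dr$ vanishes for $0\le p\le r_{\mathrm{in}}$.

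For the construction, we substitute $s=r^2$ and $t=p^2$, writing $F(r)=G(r^2)$. Then $AF(p)=\tfrac12\int_t^\infty G(s)(s-t)^{-1/2}ds$, so the condition becomes the vanishing of the Riemann--Liouville fractional integral $I^{1/2}_- G(t)$ for $t\le t_0:=r_{\mathrm{in}}^2$. Applying $I^{1/2}_-$ once more converts this into the vanishing of the ordinary integral $\int_t^\infty G(s)\,ds$ for $t\le t_0$. Conversely, if $H=\int_t^\infty G$ vanishes on $(-\infty,t_0]$, then its half-derivative $D^{1/2}_-H=I^{1/2}_-G$ does too, because the left-sided operator only looks to the right.

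We avoid that route and construct directly: we pick any nonzero $\phi\in C_c^\infty((t_0,R^2))$, where $R<r_{\mathrm{out}}$, and set $G=-D_-^{1/2}\phi$, i.e. $G(s)=\frac{1}{\sqrt\pi}\frac{d}{ds}\int_s^\infty\phi(\sigma)(\sigma-s)^{-1/2}d\sigma$, up to sign and normalization. Then $I^{1/2}_-G=\pm\phi'$ (up to constants), which vanishes for $t\le t_0$. Moreover $G$ vanishes for $s\ge R^2$. It is smooth on $\R$, since $\int_s^\infty\phi(\sigma)(\sigma-s)^{-1/2}d\sigma=\int_0^\infty\phi(s+\tau)\tau^{-1/2}d\tau$ is smooth in $s$. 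Because $G$ is smooth in $s=r^2$, the function $f(x)=G(|x|^2)$ is automatically $C^\infty$, and $f\in C_c^\infty(\mathbb D_{r_{\mathrm{out}}})$.

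The main step is arranging the sign. For $s<t_0$ (and a bit beyond, up to $\inf\supp\phi$) we have
\[
G(s)=c\,\frac{d}{ds}\int_{t_0}^{\infty}\phi(\sigma)(\sigma-s)^{-1/2}d\sigma=\frac c2\int\phi(\sigma)(\sigma-s)^{-3/2}d\sigma,
\]
which has a definite sign whenever $\phi\ge0$, $\phi\not\equiv0$. Choosing the sign convention appropriately gives $G<0$ on $(-\infty,s_1)$, where $s_1=\inf\supp\phi>t_0$. Hence $f<0$ on the open disk $\mathbb D_{\sqrt{s_1}}$, which is an open neighborhood of $\overline{\mathbb D_{r_{\mathrm{in}}}}$. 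This gives $\overline{\mathbb D_{r_{\mathrm{in}}}}\subseteq\supp f$ and $f\ne0$. Finally, we verify directly with Fubini that $\Rad f(\omega,p)=\int_{p^2}^\infty G(s)(s-p^2)^{-1/2}ds=c'\phi'(p^2)\cdot$const, which vanishes for $p^2\le t_0$. The only delicate point is justifying the interchange of the two half-order integrals (the composition $I^{1/2}_-I^{1/2}_-=I^1_-$). This is a routine Beta-integral computation for compactly supported smooth data.
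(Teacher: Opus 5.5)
Your proposal is correct and is essentially the paper's proof written in the variable $s=r^2$: with $g(t)=\gamma(t^2)$ and $\gamma=-\phi$, the paper's inverse-Abel profile $\Phi(\rho)=-\frac1\pi\int_\rho^\infty g'(t)(t^2-\rho^2)^{-1/2}\,dt$ equals $\pi^{-1/2}G(\rho^2)$. Its sign argument (the $(\cdot)^{-3/2}$ kernel) and its Fubini/Beta-integral verification are exactly your steps. One slip to fix: $I^{1/2}_-G=\frac{d}{ds}I^{1}_-\phi=-\phi$, so $\Rad f(\omega,p)=-\sqrt\pi\,\phi(p^2)$ rather than a multiple of $\phi'(p^2)$; this changes nothing, since $\phi$ vanishes on $(-\infty,r_{\mathrm{in}}^2]$.
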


\begin{proof}
Choose $r_{\mathrm{in}}<r_*<r_{\mathrm{out}}$ and a nonzero nonnegative even function $g\in C_c^\infty(\R)$ with
\[
 \supp g\subseteq(-r_{\mathrm{out}},-r_*)\cup(r_*,r_{\mathrm{out}}).
\]
For $\rho\ge0$ set
\begin{equation}\label{eq:abel-profile}
 \Phi(\rho)=-\frac1\pi\int_\rho^\infty
       \frac{g'(t)}{\sqrt{t^2-\rho^2}}\,dt,
 \qquad f(x)=\Phi(|x|).
\end{equation}
This defines a smooth compactly supported radial function. Indeed, after the substitution $t=(\rho^2+y^2)^{1/2}$,
\[
 \Phi(\rho)=-\frac1\pi\int_0^\infty
 \frac{g'((\rho^2+y^2)^{1/2})}{(\rho^2+y^2)^{1/2}}\,dy.
\]
More explicitly, define
\[
 \psi(\lambda)=
 \begin{cases}
  g'(\sqrt\lambda)/\sqrt\lambda,&\lambda>0,\\
  0,&\lambda\le0.
 \end{cases}
\]
Because $g'$ vanishes near the origin, $\psi\in C_c^\infty(\R)$. Writing
\[
 \Psi(\lambda)=-\frac1\pi\int_0^\infty\psi(\lambda+y^2)\,dy,
\]
one has $\Phi(\rho)=\Psi(\rho^2)$. Compact support of $\psi$ justifies repeated differentiation under the integral, so $\Psi\in C^\infty(\R)$ and $f(x)=\Psi(|x|^2)$ is smooth. Since $\supp g\Subset(-r_{\mathrm{out}},r_{\mathrm{out}})$, one also has $\supp f\Subset\mathbb D_{r_{\mathrm{out}}}$.

For $0\le\rho<r_*$, integration by parts, with no boundary terms, gives
\[
 \Phi(\rho)=-\frac1\pi\int_{r_*}^{r_{\mathrm{out}}}
 \frac{g(t)t}{(t^2-\rho^2)^{3/2}}\,dt<0.
\]
Thus $f<0$ on $\mathbb D_{r_*}$, which is an open neighborhood of $\overline{\mathbb D_{r_{\mathrm{in}}}}$; in particular, $\overline{\mathbb D_{r_{\mathrm{in}}}}\subseteq\supp f$.

For $p\ge0$, parametrizing $L(\omega,p)$ by $p\omega+s\omega^\perp$ and then setting $\rho=(p^2+s^2)^{1/2}$ gives the first line below. After substituting \eqref{eq:abel-profile}, the change in the order of integration is justified because
\[
 \frac2\pi\int_p^\infty |g'(t)|
 \left(\int_p^t
 \frac{\rho\,d\rho}{\sqrt{\rho^2-p^2}\sqrt{t^2-\rho^2}}\right)dt
 =\int_p^\infty |g'(t)|\,dt<\infty;
\]
here $y=(\rho^2-p^2)/(t^2-p^2)$ reduces the inner integral to $\frac12\int_0^1(y(1-y))^{-1/2}\,dy=\pi/2$. Fubini's theorem therefore gives
\begin{align*}
 \Rad f(\omega,p)
 &=2\int_p^\infty \frac{\Phi(\rho)\rho}{\sqrt{\rho^2-p^2}}\,d\rho\\
 &=-\frac2\pi\int_p^\infty g'(t)
   \left(\int_p^t
   \frac{\rho\,d\rho}{\sqrt{\rho^2-p^2}\sqrt{t^2-\rho^2}}\right)dt
 =g(p).
\end{align*}
Both sides are even in $p$, hence this identity holds for every $p$. The assertions follow because $g\ne0$ and $g=0$ on $[-r_{\mathrm{in}},r_{\mathrm{in}}]$.
\end{proof}

Sufficiency requires only an ellipse compactly contained in an open set.

\begin{proposition}\label{prop:sufficiency}
Let $D\subseteq\R^2$ be open and let $E\Subset D$ be an ellipse. There is a nonzero real $u\in C_c^\infty(D)$ whose Radon transform vanishes on every line meeting $E$, including its tangent lines. It is centrally symmetric about the center of $E$ and strictly negative on an open neighborhood of $E$; in particular, $E\subseteq\supp u$. Such witnesses can be chosen to form a countably infinite linearly independent family.
\end{proposition}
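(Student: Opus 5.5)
The plan is to transport the disk construction of Lemma~\ref{lem:disks} through an affine map. Write $E=c+Q^{1/2}\overline{\mathbb D_1}$ and set $T(y)=c+Q^{1/2}y$, so that $T(\overline{\mathbb D_1})=E$. Since $E$ is compact and $E\subseteq D$ with $D$ open, there is $\varepsilon>0$ such that the closed $\varepsilon$-neighborhood of $E$ lies in $D$. Because $T$ is Lipschitz with constant $\|Q^{1/2}\|$, we can choose $r_{\mathrm{out}}>1$ so close to $1$ that $T(\overline{\mathbb D_{r_{\mathrm{out}}}})\subseteq D$.

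Next, apply Lemma~\ref{lem:disks} with $r_{\mathrm{in}}=1$ and this $r_{\mathrm{out}}$ to obtain a radial $f$, and define
\[
 u(x)=f\bigl(T^{-1}x\bigr)=f\bigl(Q^{-1/2}(x-c)\bigr).
\]
Then $u$ is smooth and real, and $\supp u=T(\supp f)\Subset T(\mathbb D_{r_{\mathrm{out}}})\subseteq D$. Central symmetry about $c$ follows from the evenness of the radial function $f$. Moreover, $u<0$ on $T(U)$, where $U$ is the open neighborhood of $\overline{\mathbb D_1}$ on which $f<0$; this set $T(U)$ is an open neighborhood of $E$.

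The key step is the vanishing of the line integrals. A line $L$ meets $E$ (tangent lines included) iff $T^{-1}(L)$ is a line meeting $\overline{\mathbb D_1}$, i.e.\ $T^{-1}(L)=L(\eta,s)$ with $|s|\le1$. Parametrizing $L$ as $T(\ell(t))$, where $\ell$ is an arc-length parametrization of $T^{-1}(L)$, gives
\[
 \int_L u\,ds=|Q^{1/2}\eta^\perp|\int_{T^{-1}(L)} f\,ds.
\]
The factor $|Q^{1/2}\eta^\perp|$ is a positive constant, and the integral on the right is $\Rad f(\eta,s)=0$. This handles every line meeting $E$, including the tangent ones, because Lemma~\ref{lem:disks} gives vanishing for $|p|\le r_{\mathrm{in}}$ with the endpoint included.

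For the infinite independent family, take radii $1<r_*^{(1)}<r_*^{(2)}<\dots<r_{\mathrm{out}}$. For each $k$, pick $g_k$ in the proof of Lemma~\ref{lem:disks} supported in $\pm(r_*^{(k)},r_*^{(k+1)})$, with disjoint supports for different $k$. The resulting $f_k$ satisfy $\Rad f_k(\eta,p)=g_k(p)$. Since the $g_k$ have disjoint supports and are nonzero, they are linearly independent, and therefore so are the $f_k$ and hence the $u_k=f_k\circ T^{-1}$.

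I expect no serious obstacle. The only care needed is the Jacobian bookkeeping for arc length under $T$, together with checking that the lines meeting $E$ correspond exactly to the parameters $|s|\le1$.
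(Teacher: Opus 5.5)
Your proposal is correct and follows the paper's route: you transport the radial disk witness of Lemma~\ref{lem:disks} by $y\mapsto c+Q^{1/2}y$. Your arc-length factor $|Q^{1/2}\eta^\perp|$ agrees with the paper's $\det Q^{1/2}/|Q^{1/2}\omega|$, and taking $r_{\mathrm{in}}=1$ is legitimate because the lemma's vanishing includes the endpoint $|p|=r_{\mathrm{in}}$. The only difference is in the independence step, and both versions work. You use the identity $\Rad f_k=g_k$ from inside the lemma's proof, together with the disjoint supports of the $g_k$. The paper uses only the lemma's statement, through a triangular evaluation at the points $c+Q^{1/2}(r_n,0)$, where $u_n<0$ while $u_m=0$ for $m<n$.
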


\begin{proof}
Write $E=c+Q^{1/2}\overline{\mathbb D_1}$ with $Q$ positive definite. Since $E\Subset D$, there is $r_{\mathrm{out}}>1$ with
\[
 c+Q^{1/2}\overline{\mathbb D_{r_{\mathrm{out}}}}\Subset D.
\]
Choose $1<r_{\mathrm{in}}<r_{\mathrm{out}}$, apply Lemma~\ref{lem:disks}, and define
\[
 u(x)=f\bigl(Q^{-1/2}(x-c)\bigr).
\]
Then $u$ is nonzero and real. By the sign conclusion in Lemma~\ref{lem:disks}, $u$ is strictly negative on an open neighborhood of $E$, and hence $E\subseteq\supp u$. Moreover,
\[
 \supp u\Subset c+Q^{1/2}\mathbb D_{r_{\mathrm{out}}}\Subset D.
\]
Because $f$ is radial, $u(c+x)=u(c-x)$.

Under $x=c+Q^{1/2}y$, the transformed line has unit normal $Q^{1/2}\omega/|Q^{1/2}\omega|$, offset $(p-c\cdot\omega)/|Q^{1/2}\omega|$, and line-element factor $\det Q^{1/2}/|Q^{1/2}\omega|$; here we have used that $Q^{1/2}$ is symmetric and $\det Q^{1/2}>0$. Thus
\begin{equation}\label{eq:affine-covariance}
 \Rad u(\omega,p)=
 \frac{\det Q^{1/2}}{|Q^{1/2}\omega|}
 \Rad f\left(\frac{Q^{1/2}\omega}{|Q^{1/2}\omega|},
          \frac{p-c\cdot\omega}{|Q^{1/2}\omega|}\right).
\end{equation}
Every line meeting $E$ satisfies
\[
 \frac{|p-c\cdot\omega|}{|Q^{1/2}\omega|}\le1<r_{\mathrm{in}}.
\]
Formula~\eqref{eq:affine-covariance} and Lemma~\ref{lem:disks} give the required vanishing.

For independence, choose $1<r_1<R_1<r_2<R_2<\cdots<r_{\mathrm{out}}$ and repeat the construction with disk radii $r_n,R_n$, obtaining $u_n$. At $x_n=c+Q^{1/2}(r_n,0)$, one has $u_n(x_n)<0$, whereas $u_m(x_n)=0$ for $m<n$. Evaluating any finite linear relation at the point corresponding to its largest index, and then descending, shows that all coefficients vanish.
\end{proof}

Together with Theorem~\ref{thm:main}, this shows that, for open convex sets $D_0,D$, the real smooth nullspace is either zero or infinite-dimensional.

\section{The symmetry-free transfer to unoriented-line space}

Identify $\R^2$ with $\C$ by $(x_1,x_2)\leftrightarrow x_1+ix_2$, and write
\[
 \omega_\theta=(\cos\theta,\sin\theta).
\]
The coordinates
\begin{equation*}
 q=e^{2i\theta},\qquad z=pe^{i\theta}
\end{equation*}
are invariant under $(\theta,p)\mapsto(\theta+\pi,-p)$ and hence parametrize unoriented lines. Their image is the set
\begin{equation*}
 \Sigma=\{(q,z)\in\T\times\C:z=q\overline z\},
\end{equation*}
which is the total space of the M\"obius real line bundle over $\T$.

For a nonempty bounded open convex set $K_0$ and a nonempty compact convex set $K_1$, define
\begin{equation*}
 \begin{split}
 X(K_0,K_1)=\bigl\{&((\omega_1+i\omega_2)^2,
             p(\omega_1+i\omega_2)):\ \omega\in \mathbb S^1,\\
 &p\in[a_{K_1}(\omega),b_{K_1}(\omega)]
       \setminus(a_{K_0}(\omega),b_{K_0}(\omega))\bigr\}.
 \end{split}
\end{equation*}
This is a compact subset of $\Sigma$. For $\theta\in\R$, write
\begin{equation*}
 X_\theta=\{p\in\R:(e^{2i\theta},pe^{i\theta})\in X(K_0,K_1)\}.
\end{equation*}
Thus
\begin{equation*}
 X_\theta=[a_{K_1}(\omega_\theta),b_{K_1}(\omega_\theta)]
 \setminus(a_{K_0}(\omega_\theta),b_{K_0}(\omega_\theta)).
\end{equation*}

\begin{lemma}[Unoriented-line transfer]\label{lem:transfer}
Let $K_0$ and $K_1$ be as above. Let $h:\R\times\R\to\C$ be jointly measurable, and suppose
\begin{equation}\label{eq:line-symmetry}
 h(\theta+\pi,-p)=h(\theta,p)
\end{equation}
for almost every $(\theta,p)$. Assume that $0\ne h\in L^1([0,2\pi)\times\R)$ and
\begin{equation}\label{eq:transfer-support}
 h(\theta,p)=0\quad\text{unless }p\in X_\theta
\end{equation}
almost everywhere. Suppose also that, for every $j\ge0$, the moment
\[
 h_j(\theta)=\int_\R p^j h(\theta,p)\,dp
\]
agrees almost everywhere with a trigonometric polynomial containing only the frequencies $e^{ik\theta}$ with
\begin{equation*}
 |k|\le j,\qquad k\equiv j\pmod2.
\end{equation*}
Then there is a nonzero finite regular complex Borel measure $\eta$ on $X(K_0,K_1)$ such that
\begin{equation}\label{eq:annihilation}
 \int_{X(K_0,K_1)}z^jq^\ell\,d\eta=0,
 \qquad j\ge0,\quad \ell\ge1.
\end{equation}
\end{lemma}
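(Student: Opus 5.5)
The plan is to take $\eta$ to be the pushforward of the complex measure $h\,d\theta\,dp$ under the line-coordinate map
\[
 T:[0,2\pi)\times\R\to\Sigma,\qquad T(\theta,p)=(e^{2i\theta},pe^{i\theta}),
\]
and then check three things: that $\eta$ lives on $X(K_0,K_1)$, that it annihilates the monomials $z^jq^\ell$ with $\ell\ge1$, and that it is nonzero. The vanishing will come directly from the frequency restriction on the moments $h_j$. Nonvanishing is the only place where the symmetry \eqref{eq:line-symmetry} is used, and I expect it to be the only delicate point.

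\emph{Definition and support.} Since $h\in L^1([0,2\pi)\times\R)$, the set function
\[
 \eta(S)=\int_{T^{-1}(S)}h\,d\theta\,dp
\]
is a finite complex Borel measure on $\Sigma$. The point $T(\theta,p)$ lies in $X(K_0,K_1)$ exactly when $p\in X_\theta$. By \eqref{eq:transfer-support}, $h$ vanishes almost everywhere off $T^{-1}(X(K_0,K_1))$, so $|\eta|$ is concentrated on the compact set $X(K_0,K_1)$, and we restrict $\eta$ to it. A finite Borel measure on a compact metric space is automatically regular. Because $X(K_0,K_1)$ is compact, $p$ is bounded on the support of $h$. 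Hence all the moments $h_j$ are integrable, and all integrals below converge absolutely.

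\emph{Annihilation.} By the change of variables for pushforwards and Fubini,
\[
 \int z^jq^\ell\,d\eta=\int_0^{2\pi}\!\!\int_\R p^je^{ij\theta}e^{2i\ell\theta}h(\theta,p)\,dp\,d\theta
 =\int_0^{2\pi}h_j(\theta)e^{i(j+2\ell)\theta}\,d\theta .
\]
By hypothesis, $h_j$ is almost everywhere a combination of $e^{ik\theta}$ with $|k|\le j$. Each term contributes $\int_0^{2\pi}e^{i(k+j+2\ell)\theta}\,d\theta$, which vanishes unless $k=-(j+2\ell)$. For $\ell\ge1$ this would force $|k|=j+2\ell>j$, which is excluded. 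This proves \eqref{eq:annihilation}.

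\emph{Nonvanishing (main obstacle).} The map $T$ is two-to-one: $T(\theta,p)=T(\theta+\pi,-p)$. So a priori the pushforward could cancel, and this is exactly where \eqref{eq:line-symmetry} enters.

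Split $[0,2\pi)\times\R$ into $[0,\pi)\times\R$ and $[\pi,2\pi)\times\R$, and map the second piece onto the first by $(\theta,p)\mapsto(\theta-\pi,-p)$. This map preserves Lebesgue measure, and $h$ is invariant under it almost everywhere. Consequently $\eta=2\,T_*(h\,\mathbf 1_{[0,\pi)\times\R})$.

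On $[0,\pi)\times\R$ the map $T$ is injective. Indeed, $q$ determines $\theta\in[0,\pi)$, and then $z$ determines $p$. The map is also continuous, so by the Lusin--Souslin theorem it sends Borel sets to Borel sets.

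Since $h\ne0$ in $L^1$ and $h(\theta+\pi,-p)=h(\theta,p)$, the restriction of $h$ to $[0,\pi)\times\R$ is nonzero. Choose a Borel set $A\subseteq[0,\pi)\times\R$ with $\int_Ah\ne0$; for example, take a set where $\Real h>0$, or one where $\Imag h>0$. Then
\[
 \eta(T(A))=2\int_Ah\,d\theta\,dp\ne0 ,
\]
so $\eta\ne0$.

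If one prefers to avoid Lusin--Souslin, the same conclusion follows by testing $\eta$ against continuous functions $\varphi(q,z)$. On $[0,\pi)\times\R$ these pull back to a family that separates points and is closed under conjugation, so by Stone--Weierstrass it is dense on compacts.
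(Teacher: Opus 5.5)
Your proof is correct and is essentially the paper's argument. By the symmetry, your $\eta$ is exactly twice the paper's measure: the paper pushes forward $h\,\mathbf 1_{[0,\pi)\times\R}\,d\theta\,dp$ under the restriction $\Gamma$ of your map $T$ to $[0,\pi)\times\R$, and it gets nonvanishing the same way you do, from the symmetry and the injectivity of $\Gamma$. The differences are cosmetic: integrating over the full period lets you do without the parity condition $k\equiv j\pmod 2$, which the paper's half-period integral needs to make all frequencies even, and where you invoke Lusin--Souslin the paper writes down the explicit Borel inverse $(q,z)\mapsto(\frac12\operatorname{Arg}q,\,e^{-\frac i2\operatorname{Arg}q}z)$.
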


\begin{proof}
Boundedness of $K_1$ and Fubini's theorem give absolute convergence of every moment for almost every $\theta$. Write $X=X(K_0,K_1)$, and let
\[
 \Gamma:[0,\pi)\times\R\longrightarrow\Sigma,
 \qquad
 \Gamma(\theta,p)=(e^{2i\theta},pe^{i\theta}).
\]
This is a Borel isomorphism. Indeed, if $\operatorname{Arg}q\in[0,2\pi)$, then its inverse is
\[
 (q,z)\longmapsto
 \left(\frac12\operatorname{Arg}q,
       e^{-\frac i2\operatorname{Arg}q}z\right),
\]
and the second component is real on $\Sigma$. Equation~\eqref{eq:line-symmetry} shows that the restriction of $h(\theta,p)\,dp\,d\theta$ to $[0,\pi)\times\R$ is a nonzero finite complex measure. Let $\eta$ be its pushforward under $\Gamma$. The Borel-isomorphism property makes $\eta$ nonzero, and \eqref{eq:transfer-support} concentrates its total variation on the compact set $X(K_0,K_1)$. It is therefore a finite regular Borel measure on that set.

For $j\ge0$ and $\ell\ge1$, the pushforward formula gives
\begin{align*}
 \int_X z^jq^\ell\,d\eta
 &=\int_0^\pi e^{i(j+2\ell)\theta}
       \left(\int_\R p^jh(\theta,p)\,dp\right)d\theta\\
 &=\int_0^\pi e^{i(j+2\ell)\theta}h_j(\theta)\,d\theta.
\end{align*}
Every frequency in the last integrand is $e^{im\theta}$ with
\[
 m=j+2\ell+k,
 \qquad |k|\le j,
 \qquad k\equiv j\pmod2.
\]
Thus $m$ is an even integer and $m\ge2\ell>0$. Its integral over $[0,\pi)$ is zero, proving \eqref{eq:annihilation}.
\end{proof}

\section{From unoriented-line space to a Hardy-space model}

\begin{lemma}[Hardy-model reduction]\label{lem:hardy-model}
Let $X\subseteq\Sigma$ be compact. Suppose that a nonzero finite regular complex Borel measure $\eta$ on $X$ satisfies
\begin{equation}\label{eq:abstract-annihilation}
 \int_Xz^jq^\ell\,d\eta=0,
 \qquad j\ge0,
 \quad \ell\ge1.
\end{equation}
Then there are a nonzero complex Hilbert space $\mathcal E$ and an operator $A\in\mathcal B(\mathcal E)$ such that, for every $\theta\in\R$,
\begin{equation}\label{eq:fiber-spectrum}
 \sigma\bigl(e^{-i\theta}A+e^{i\theta}A^*\bigr)
 \subseteq\{p\in\R:(e^{2i\theta},pe^{i\theta})\in X\}.
\end{equation}
\end{lemma}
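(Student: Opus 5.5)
The plan is to build a positive measure $\mu$ on $X$ from $\eta$ by a uniform-algebra argument. Multiplication by $q$ on the closure of the analytic polynomials in $L^2(\mu)$ should then contain a nonzero shift. The Wold decomposition turns this into a vector-valued Hardy space, and the relation $z=q\overline z$ on $\Sigma$ forces multiplication by $z$ to be the Toeplitz operator with symbol $A+qA^*$.

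\emph{Step 1: a character killing $q$.} Let $\mathcal A\subseteq C(X)$ be the uniform closure of the polynomials in $q$ and $z$. On $\Sigma$ one has $|q|=1$ and $\overline z=\overline q z$. Hence the $*$-algebra generated by $q,\overline q,z$ separates points and is closed under conjugation, so by Stone--Weierstrass the span of $\{z^jq^\ell:j\ge0,\ \ell\in\mathbb Z\}$ is dense in $C(X)$. Suppose $q$ were invertible in $\mathcal A$. Then $\overline q=q^{-1}\in\mathcal A$, and \eqref{eq:abstract-annihilation} would extend to all $\ell\in\mathbb Z$: write $z^jq^{\ell}=q\cdot(z^jq^{\ell-1})$ with $z^jq^{\ell-1}\in\mathcal A$, and approximate by polynomials. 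This would give $\eta=0$, a contradiction. So $q$ is not invertible in $\mathcal A$, and some character $\varphi$ of $\mathcal A$ has $\varphi(q)=0$. By Hahn--Banach and Riesz, $\varphi$ has a representing probability measure $\mu$ on $X$, so $\int f\,d\mu=\varphi(f)$ for $f\in\mathcal A$.

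\emph{Step 2: Hardy model.} Let $H$ be the closure of $\mathcal A$ in $L^2(\mu)$. The operators $M_q,M_z$ act on $L^2(\mu)$, with $M_q$ unitary and $H$ invariant under both. Multiplicativity of $\varphi$ gives $\langle q^n f,1\rangle_\mu=\varphi(q)^n\varphi(f)=0$ for $n\ge1$ and $f\in\mathcal A$. Hence $1\perp qH$, and the wandering space $\mathcal E=H\ominus qH$ is nonzero. I would apply the Wold decomposition to $V=M_q|_H$ and identify its pure part $H_s=\bigoplus_{n\ge0}q^n\mathcal E$ with $H^2(\mathcal E)$, so that $V$ becomes the shift. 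The operator $T=M_z|_H$ commutes with $V$.

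The main obstacle is that $T$ must be compatible with the decomposition, i.e. $H_s$ must reduce (or at least be invariant under) $T$ in a usable way. If that fails, I would instead work on the $M_q$-invariant subspace generated by $\mathcal E$. In either case I would verify directly that its compression commutes with the shift, so that it is an analytic Toeplitz operator $T_\Phi$ with $\Phi\in H^\infty(\mathcal B(\mathcal E))$.

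\emph{Step 3: the symbol is $A+qA^*$.} On $\Sigma$ we have $z=q\overline z$. So for $f,g\in H$,
\[
\langle zf,qg\rangle_\mu=\langle \overline z f,g\rangle_\mu=\langle f,zg\rangle_\mu .
\]
Translated into the model, this gives $S^*T_\Phi=T_\Phi^*$ on the relevant space. Comparing Fourier coefficients of the symbol then shows $\Phi(q)=A+qB$ with $B=A^*$, where $A=\Phi(0)\in\mathcal B(\mathcal E)$.

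\emph{Step 4: spectral inclusion.} Pass to the minimal unitary extension. This is multiplication by $q$ and $\Phi(q)$ on $L^2(\mathcal E)$, and it is realized inside $L^2(\mu)$ as multiplication by $(q,z)$ on the reducing subspace generated by $\mathcal E$. That commuting pair of normal operators therefore has joint spectrum inside $\operatorname{supp}\mu\subseteq X$. Consequently, for a.e.\ $\theta$, the self-adjoint operator $e^{-i\theta}\Phi(e^{2i\theta})=e^{-i\theta}A+e^{i\theta}A^*$ has spectrum in the fiber $X_\theta$. Since $\theta\mapsto e^{-i\theta}A+e^{i\theta}A^*$ is norm continuous and $X$ is closed, upper semicontinuity of the spectrum extends \eqref{eq:fiber-spectrum} to every $\theta$.
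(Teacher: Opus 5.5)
Your outline follows the paper's route: a character with $\varphi(q)=0$, its representing probability measure $\mu$, the Wold decomposition of $V=M_q|_H$, the identity $z=q\overline z$ forcing the symbol $A+qA^*$, and the bilateral model realized inside $L^2(\mu)$. Your observation $1\in H\ominus qH$ is a clean way to see $\mathcal E\neq0$. \textbf{The gap is the step you flag yourself and leave open.} Step~4 needs $ze=Ae+qA^*e$ to hold \emph{in} $L^2(\mu)$ for $e\in\mathcal E$. That is, $T=M_z|_H$ itself must map $\mathcal E$ into $H_s=\bigoplus_{n\ge0}q^n\mathcal E$. Only then is $\mathcal K=\bigoplus_{n\in\mathbb Z}q^n\mathcal E$ invariant under $M_z$ and $M_{\overline z}=M_{\overline q z}$, so that $(M_q,M_z)|_{\mathcal K}\simeq(M_q,M_{A+qA^*})$ and the support of $\mu$ controls $\sigma(A+q_0A^*)$. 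Your fallback does not give this. It works on the same space $H_s$, but only with the compression $P_{H_s}T|_{H_s}$. If $Te$ had a component in $H_u=\bigcap_n q^nH$, the reducing subspace generated by $\mathcal E$ would not be a copy of $L^2(\T;\mathcal E)$ carrying $M_\Phi$. Compressions of normal operators can also have spectrum far from the joint spectrum, so nothing would tie $\Phi$ to $X$.

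The missing step follows from your own Step~3 identity, which says $V^*T=T^*$ on $H$. Since $T$ commutes with $V$, $TH_u\subseteq H_u$. Since $H_u$ reduces $V$, also $T^*H_u=V^*TH_u\subseteq H_u$. Hence $H_s$ reduces $T$, and $T|_{H_s}$ is a genuine analytic Toeplitz operator. The paper instead proves that $V$ is pure. It computes $\langle V^{*n}q^mz^j,q^{m'}z^{j'}\rangle=\int q^{\ell_0}z^{j+j'}\,d\mu$; conjugating and using $\overline z=\overline q z$ turns this into $\varphi(q^kz^{j+j'})=0$ with $k=n-m+m'-j\ge1$ once $n>m+j$. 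Hence $H=H_s$, and either route closes the gap.

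There is a smaller issue in Step~4. Upper semicontinuity of the spectrum goes the wrong direction for passing from a.e.\ $\theta$ to every $\theta$. What you need is that, for self-adjoint $H,H'$, every point of $\sigma(H')$ lies within $\|H-H'\|$ of $\sigma(H)$. The paper avoids both this and your unexplained ``a.e.'' step by building approximate eigenvectors $g_n(q)e_n$, with $g_n$ concentrated near $q_0=e^{2i\theta}$. That argument works directly at every $\theta$.
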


\begin{proof}
Let
\[
 \mathcal A=\overline{\C[q,z]}^{\,\|\cdot\|_X}
 \subseteq C(X).
\]
The coordinate $q$ is not invertible in $\mathcal A$. Indeed, if $q^{-1}\in\mathcal A$, then on $X\subseteq\Sigma$,
\[
 \overline q=q^{-1}\in\mathcal A,
 \qquad
 \overline z=q^{-1}z\in\mathcal A.
\]
Hence $\mathcal A$ would be self-adjoint. Since $q$ and $z$ separate the points of $X$, Stone--Weierstrass would give $\mathcal A=C(X)$. Moreover, $q\C[q,z]$ would be dense in $\mathcal A$, because one may approximate $q^{-1}f$ and then multiply by $q$. Equation~\eqref{eq:abstract-annihilation} would then make $\eta$ annihilate all of $C(X)$, contradicting $\eta\ne0$.

Choose a maximal ideal containing the proper ideal $q\mathcal A$. As usual, maximal ideals in a unital Banach algebra are closed, and the Gelfand--Mazur theorem gives a continuous unital character
\[
 \chi:\mathcal A\longrightarrow\C,
 \qquad \chi(q)=0.
\]
A norm-preserving Hahn--Banach extension of $\chi$ to $C(X)$ is represented by a finite complex Borel measure $\mu$ with $\|\mu\|=1$ and $\mu(X)=1$. If $d\mu=\varphi\,d|\mu|$ is its polar decomposition, then
\[
 \int_X|1-\varphi|^2\,d|\mu|
 =2\|\mu\|-2\Real\mu(X)=0.
\]
Thus $\mu$ is a probability measure representing $\chi$, that is, $\chi(f)=\int_Xf\,d\mu$ for all $f\in\mathcal A$.

Let $\nu$ be the $q$-marginal of $\mu$. For $n\ge1$,
\[
 \int_\T q^n\,d\nu=\chi(q^n)=0.
\]
Positivity gives the same conclusion for the negative Fourier coefficients. Hence $\nu$ is normalized Haar measure $m_\T$.

Let $M_q$ and $M_z$ denote the operators of multiplication by the coordinate functions $q$ and $z$ on $L^2(\mu)$, and put
\[
 \mathcal H=\overline{\mathcal A}^{\,L^2(\mu)},
 \qquad
 T=M_q|_{\mathcal H},
 \qquad
 S=M_z|_{\mathcal H}.
\]
Then $T$ is an isometry and $S$ commutes with $T$. For $f,g\in\mathcal H$,
\[
 \langle T^*Sf,g\rangle
 =\int_Xzf\,\overline{qg}\,d\mu
 =\int_X\overline z\,f\overline g\,d\mu
 =\langle S^*f,g\rangle,
\]
so $T^*S=S^*$. Taking adjoints gives $S^*T=S$; hence
\begin{equation}\label{eq:operator-relation}
 T^*S=S^*,
 \qquad
 S^*T=S.
\end{equation}

The isometry $T$ is \emph{pure}, that is, $T^{*n}\to0$ strongly. To see this, take nonnegative integers $m,j,m',j'$ and let $n>m+j$. With
\[
 \ell_0=m-m'-n-j',
 \qquad J=j+j',
\]
one has $\ell_0\le-J-1$; equivalently,
\[
 -\ell_0-J=n-m+m'-j\ge1.
\]
Therefore
\begin{align*}
 \left\langle T^{*n}q^mz^j,q^{m'}z^{j'}\right\rangle
 &=\int_Xq^{\ell_0}z^J\,d\mu,\\
 \overline{\int_Xq^{\ell_0}z^J\,d\mu}
 &=\int_Xq^{-\ell_0}\overline z^{\,J}\,d\mu
 =\int_Xq^{-\ell_0-J}z^J\,d\mu=0.
\end{align*}
The last equality follows from the displayed inequality and the representing identity
\[
 \int_Xq^kz^J\,d\mu=\chi(q^kz^J)=0,
 \qquad k\ge1.
\]
Thus $T^{*n}(q^mz^j)$ is orthogonal to every \emph{polynomial vector}---that is, to every element of $\C[q,z]$, regarded as a subspace of $\mathcal H$---and hence is zero, whenever $n>m+j$. By linearity, every polynomial vector is killed by a sufficiently high power of $T^*$. Since $\|T^*\|\le1$, if $g$ is a polynomial vector and $T^{*N}g=0$, then
\[
 \|T^{*n}f\|\le\|f-g\|
 \qquad(n\ge N).
\]
Density of the polynomial vectors therefore gives $T^{*n}f\to0$ for every $f\in\mathcal H$.

Put $\mathcal E=\ker T^*$. This space is nonzero: otherwise the pure isometry $T$ would be unitary on the nonzero space $\mathcal H$. The Wold decomposition identifies
\[
 \mathcal H=\bigoplus_{n\ge0}T^n\mathcal E
 \simeq H^2(\T,\mathcal E),
 \qquad T\simeq M_q.
\]
Let $P_{\mathcal E}$ denote the orthogonal projection of $\mathcal H$ onto $\mathcal E$. For $e\in\mathcal E$, write its Wold expansion as
\begin{equation}\label{eq:Se-expansion}
 Se=\sum_{n\ge0}T^nA_ne,
 \qquad
 A_n=P_{\mathcal E}T^{*n}S|_{\mathcal E}.
\end{equation}
Because $S$ commutes with $T$, this expansion determines $S$ on every summand $T^k\mathcal E$. We now use the second identity in \eqref{eq:operator-relation}. For $e,f\in\mathcal E$,
\[
 \langle S^*Te,f\rangle
 =\langle Te,Sf\rangle
 =\langle e,A_1f\rangle,
\]
while
\[
 \langle S^*Te,Tf\rangle
 =\langle Te,TSf\rangle
 =\langle e,A_0f\rangle.
\]
For $k\ge2$, one has $\langle S^*Te,T^kf\rangle=0$. Hence
\[
 S^*Te=A_1^*e+TA_0^*e.
\]
Comparing this with \eqref{eq:Se-expansion} and $Se=S^*Te$ gives
\[
 A_n=0\quad(n\ge2),
 \qquad A_1=A_0^*.
\]
Writing $A=A_0$, we obtain
\begin{equation}\label{eq:hardy-symbol}
 T\simeq M_q,
 \qquad
 S\simeq M_{A+qA^*}.
\end{equation}

It remains to retain the fiberwise support information. Let $U=M_q$ and $V=M_z$ on $L^2(\mu)$, and regard $\mathcal E\subseteq\mathcal H$ as a subspace of $L^2(\mu)$. The spaces $U^n\mathcal E$, $n\in\mathbb Z$, are mutually orthogonal: if $m>n$ and $e,f\in\mathcal E$, then
\[
 \langle U^ne,U^mf\rangle
 =\langle e,T^{m-n}f\rangle=0,
\]
because $\mathcal E=\ker T^*=\mathcal H\ominus T\mathcal H$. Let
\[
 \mathcal K=\bigoplus_{n\in\mathbb Z}U^n\mathcal E.
\]
For $e\in\mathcal E$, \eqref{eq:hardy-symbol} gives
\[
 Ve=Ae+UA^*e.
\]
The multiplication operators $U$ and $V$ commute, and the relation $z=q\overline z$ on $X$ gives $V^*=U^*V$. Therefore, for every $n\in\mathbb Z$ and $e\in\mathcal E$,
\[
 VU^ne=U^nAe+U^{n+1}A^*e,
 \qquad
 V^*U^ne=U^{n-1}Ae+U^nA^*e.
\]
Together with the shift actions of $U$ and $U^*$, these formulas show that $\mathcal K$ is invariant under $U,U^*,V,V^*$ and hence reduces the commuting normal pair $(U,V)$. The map $U^ne\mapsto q^ne$ extends by Fourier series to a unitary identification
\begin{equation*}
 \mathcal K\simeq L^2(\T,m_\T;\mathcal E),
 \qquad
 U\simeq M_q,
 \qquad
 V\simeq M_{A+qA^*}.
\end{equation*}

Fix $\theta$, put $q_0=e^{2i\theta}$, and let
\[
 p_0\in\sigma(e^{-i\theta}A+e^{i\theta}A^*),
 \qquad z_0=e^{i\theta}p_0.
\]
Then $z_0\in\sigma(A+q_0A^*)$. Since
\[
 A+q_0A^*
 =e^{i\theta}(e^{-i\theta}A+e^{i\theta}A^*)
\]
is a unimodular scalar multiple of a self-adjoint operator, it is normal. If a normal operator minus a spectral value were bounded below, its range would be closed; normality would make the range dense, hence surjective, a contradiction. Thus every spectral point is an approximate eigenvalue, so there are unit vectors $e_n\in\mathcal E$ such that
\[
 \|(A+q_0A^*-z_0)e_n\|\longrightarrow0.
\]
Choose normalized scalar functions $g_n\in L^2(\T,m_\T)$ supported on arcs shrinking to $q_0$, and set $f_n(q)=g_n(q)e_n$. By norm continuity of $q\mapsto A+qA^*$,
\[
 \|(M_q-q_0)f_n\|\longrightarrow0,
 \qquad
 \|(M_{A+qA^*}-z_0)f_n\|\longrightarrow0.
\]
If $(q_0,z_0)\notin X$, compactness would give $\dist((q_0,z_0),X)>0$. In the original realization $\mathcal K\subseteq L^2(\mu)$, the measure $\mu$ is carried by $X$; hence every $f\in\mathcal K$ satisfies
\[
 \|(U-q_0)f\|^2+\|(V-z_0)f\|^2\ge\dist((q_0,z_0),X)^2\,\|f\|^2,
\]
contradicting the preceding approximate eigenvectors. Hence $(q_0,z_0)\in X$, which is exactly \eqref{eq:fiber-spectrum}.
\end{proof}

\section{A spectral gap produces an ellipse}

No finite-dimensionality is assumed before the final $2\times2$ construction. The functions $a_1,a_0,b_0,b_1$ appearing in Theorem~\ref{thm:operator-ellipse} below are abstract data; in the application (Proposition~\ref{prop:ellipse}) they will be the support-function endpoints \eqref{eq:endpoints} of the two convex sets, evaluated at $\omega_\theta$.

\begin{lemma}[A common neutral subspace]\label{lem:neutral}
Let $H_0,H_1$ be bounded self-adjoint operators on a nonzero complex Hilbert space $\mathcal H$. Suppose that
\begin{equation}\label{eq:pencil-invertible}
 H_0\cos\theta+H_1\sin\theta
\end{equation}
is invertible for every $\theta\in\R$. Then there is a nonzero closed subspace $\mathcal M\subset\mathcal H$, with orthogonal projection $P_{\mathcal M}$ onto it, such that, for every $\theta$,
\begin{equation}\label{eq:neutral-conclusion}
 P_{\mathcal M}(H_0\cos\theta+H_1\sin\theta)|_{\mathcal M}=0,
 \qquad
 (H_0\cos\theta+H_1\sin\theta)\mathcal M=\mathcal M^\perp.
\end{equation}
The first identity says that $\mathcal M$ is \emph{neutral} for the pencil, that is, $\langle(H_0\cos\theta+H_1\sin\theta)e,e'\rangle=0$ for all $e,e'\in\mathcal M$ and all $\theta$.
\end{lemma}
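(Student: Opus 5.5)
The plan is to reduce the pencil to a single operator that is self-adjoint for an indefinite inner product, and to take $\mathcal M$ to be a Riesz spectral subspace of that operator. Write $H(\theta)=H_0\cos\theta+H_1\sin\theta$. Taking $\theta=\pi/2$ shows that $H_1$ is invertible, so we may set
\[
 K=H_1^{-1}H_0\in\mathcal B(\mathcal H).
\]
For $\sin\theta\ne0$ we have $H(\theta)=\sin\theta\,H_1(K+\cot\theta)$. As $\theta$ ranges over $(0,\pi)$, $\cot\theta$ takes every real value, so the hypothesis says exactly that $K-t$ is invertible for every $t\in\R$. Hence $\sigma(K)\cap\R=\varnothing$, and $\sigma(K)$ splits into the disjoint compact pieces $\sigma_\pm=\sigma(K)\cap\{\pm\Imag\lambda>0\}$. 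Let
\[
 \Pi_\pm=\frac1{2\pi i}\oint_{\gamma_\pm}(\lambda-K)^{-1}\,d\lambda
\]
be the corresponding Riesz projections, where $\gamma_+$ and $\gamma_-$ are contours in the open upper and lower half-planes enclosing $\sigma_+$ and $\sigma_-$. Then $\Pi_++\Pi_-=I$, and both projections commute with $K$. We take $\mathcal M=\ran\Pi_+$, which is closed because $\Pi_+$ is a bounded idempotent, and we let $\mathcal N=\ran\Pi_-$.

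The key structural fact is that $K$ is self-adjoint for the form $[x,y]=\langle H_1x,y\rangle$: indeed $H_1K=H_0=K^*H_1$. Taking adjoints of the resolvent identity gives
\[
 H_1(\lambda-K)^{-1}=\bigl((\overline\lambda-K)^{-1}\bigr)^*H_1.
\]
Integrating this over $\gamma_+$, we use that the conjugate contour $\overline{\gamma_+}$ encloses $\sigma_-$ with reversed orientation, which the conjugation in the adjoint compensates. The result should be
\[
 H_1\Pi_+=\Pi_-^*H_1,
\]
and checking this orientation bookkeeping is the step I expect to need the most care. Two consequences follow. First, $\sigma(K)\ne\varnothing$, and $\sigma(K)$ is invariant under conjugation because $K^*=H_1KH_1^{-1}$ is similar to $K$; so $\sigma_+$ and $\sigma_-$ are both nonempty, and therefore $\mathcal M\ne0$. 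Second, $\mathcal M$ is neutral. For $x,y\in\mathcal M$,
\[
 \langle H_1x,y\rangle=\langle H_1\Pi_+x,\Pi_+y\rangle=\langle \Pi_-^*H_1x,\Pi_+y\rangle=\langle H_1x,\Pi_-\Pi_+y\rangle=0.
\]
Since $K\mathcal M\subseteq\mathcal M$, we also get $\langle H_0x,y\rangle=\langle H_1Kx,y\rangle=0$. Hence $P_{\mathcal M}H(\theta)|_{\mathcal M}=0$ for every $\theta$. The same argument with the roles of $\Pi_+$ and $\Pi_-$ exchanged shows that $\mathcal N$ is neutral as well.

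Finally we prove $H(\theta)\mathcal M=\mathcal M^\perp$. Neutrality of $\mathcal M$ gives the inclusion $H(\theta)\mathcal M\subseteq\mathcal M^\perp$. For the reverse inclusion, let $y\perp\mathcal M$, set $x=H(\theta)^{-1}y$, and decompose $x=m+n$ with $m\in\mathcal M$ and $n\in\mathcal N$. For every $m'\in\mathcal M$,
\[
 \langle H(\theta)n,m'\rangle=\langle y,m'\rangle-\langle H(\theta)m,m'\rangle=0,
\]
and for every $n'\in\mathcal N$ we have $\langle H(\theta)n,n'\rangle=0$ by neutrality of $\mathcal N$. Since $\mathcal M+\mathcal N=\mathcal H$, this forces $H(\theta)n=0$, so $n=0$ by invertibility of $H(\theta)$. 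Thus $y=H(\theta)m\in H(\theta)\mathcal M$, which completes \eqref{eq:neutral-conclusion}.
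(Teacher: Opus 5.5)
Your proof is correct and matches the paper's argument: the paper uses $R=H_0^{-1}H_1$ (invertibility at $\theta=0$) where you use $K=H_1^{-1}H_0$, derives the same Riesz-projection intertwining $P_+^*H_0=H_0P_-$, and finishes with the same decomposition argument for $H(\theta)\mathcal M=\mathcal M^\perp$. One harmless slip: $\sin\theta\,H_1(K+\cot\theta)$ equals $H_0\sin\theta+H_1\cos\theta=H(\pi/2-\theta)$, not $H(\theta)$ (the correct factorization is $H(\theta)=\cos\theta\,H_1(K+\tan\theta)$); since the hypothesis holds for all $\theta$, your conclusion that $K-t$ is invertible for every real $t$ still stands.
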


\begin{proof}
The operator $H_0$ is invertible. Put
\[
 R=H_0^{-1}H_1.
\]
Then
\begin{equation}\label{eq:R-selfadjoint}
 R^*H_0=H_0R,
 \qquad
 R^*=H_0RH_0^{-1}.
\end{equation}
The invertibility of \eqref{eq:pencil-invertible} implies
\[
 \sigma(R)\cap\R=\varnothing.
\]
Indeed, if $t\in\sigma(R)\cap\R$, choose $\theta$ with $\cos\theta+t\sin\theta=0$ and apply spectral mapping to $\cos\theta I+\sin\theta R$. Moreover, \eqref{eq:R-selfadjoint} and $\sigma(R^*)=\{\overline\lambda:\lambda\in\sigma(R)\}$ show that $\sigma(R)$ is invariant under complex conjugation.

Because $\sigma(R)$ is compact and disjoint from $\R$, its intersections with the upper and lower half-planes are separated compact spectral sets. Let $P_+$ and $P_-$ be the corresponding Riesz projections. Since $\sigma(R)$ is nonempty and invariant under complex conjugation, both projections are nonzero, and
\[
 \mathcal H=\ran P_+\dotplus\ran P_-
 \qquad\text{(topological direct sum)}.
\]
Taking adjoints in the Riesz-integral formula shows that $P_+^*$ is the Riesz projection of $R^*$ associated with the lower half-plane. The similarity in \eqref{eq:R-selfadjoint} therefore gives
\[
 P_+^*=H_0P_-H_0^{-1},
\]
or equivalently
\begin{equation}\label{eq:riesz-adjoint}
 P_+^*H_0=H_0P_-.
\end{equation}
Set $\mathcal M=\ran P_+$ and $W=\ran P_-$. Since $P_-P_+=0$, equation~\eqref{eq:riesz-adjoint} gives
\[
 P_+^*H_0P_+=0.
\]
Because $H_1=H_0R$ and $R$ commutes with $P_+$, one also has
\[
 P_+^*H_1P_+=0.
\]
Thus $\langle H_je,e'\rangle=0$ for $e,e'\in\mathcal M$ and $j=0,1$, equivalently $H_j\mathcal M\subseteq\mathcal M^\perp$. The same argument shows that $W$ is neutral for both $H_0$ and $H_1$.

Fix $G=H_0\cos\theta+H_1\sin\theta$. Neutrality gives $G\mathcal M\subseteq\mathcal M^\perp$. Conversely, let $x=e+w\in G^{-1}(\mathcal M^\perp)$, with $e\in\mathcal M$ and $w\in W$. For every $e'\in\mathcal M$,
\[
 \langle Gw,e'\rangle
 =\langle Gx,e'\rangle-\langle Ge,e'\rangle=0.
\]
Neutrality of $W$ also gives $\langle Gw,w'\rangle=0$ for every $w'\in W$. Since $\mathcal M\dotplus W=\mathcal H$, it follows that $Gw=0$. The operator $G$ is invertible, so $w=0$ and $x\in\mathcal M$. Hence $G^{-1}(\mathcal M^\perp)=\mathcal M$, and therefore $G\mathcal M=\mathcal M^\perp$.
\end{proof}

\begin{lemma}[The ellipse of a $2\times2$ matrix]\label{lem:matrix-ellipse}
For every matrix $C\in \C^{2\times2}$ there are $c\in\R^2$ and a real positive semidefinite matrix $Q$ such that, for every $\theta$, the two eigenvalues of the Hermitian matrix $\Real(e^{-i\theta}C)$ are
\begin{equation}\label{eq:matrix-eigen-endpoints}
 c\cdot\omega_\theta
 \mathbin{-}(\omega_\theta^{\mathsf T}Q\omega_\theta)^{1/2},
 \qquad
 c\cdot\omega_\theta
 \mathbin{+}(\omega_\theta^{\mathsf T}Q\omega_\theta)^{1/2}.
\end{equation}
Consequently these eigenvalues are the projection endpoints of the possibly degenerate elliptic disk
\[
 E_C=c+Q^{1/2}\overline{\mathbb D_1}.
\]
If the two eigenvalues are distinct for every $\theta$, then $Q$ is positive definite and $E_C$ is an ellipse.
\end{lemma}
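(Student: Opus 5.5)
We compute the eigenvalues of $\Real(e^{-i\theta}C)$ directly and match them to the form \eqref{eq:matrix-eigen-endpoints}. Write $C=C_1+iC_2$ with $C_1=\Real C$ and $C_2=\Imag C$, both Hermitian $2\times2$ matrices. Then
\[
 \Real(e^{-i\theta}C)=C_1\cos\theta+C_2\sin\theta .
\]
The eigenvalues of a Hermitian $2\times2$ matrix $M$ are $\tfrac12\tr M\pm\bigl(\tfrac14(\tr M)^2-\det M\bigr)^{1/2}$. Here $\tfrac12\tr M=\tfrac12(\tr C_1)\cos\theta+\tfrac12(\tr C_2)\sin\theta$. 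This is linear in $\omega_\theta$, so we set
\[
 c=\tfrac12(\tr C_1,\tr C_2)\in\R^2 .
\]

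For the discriminant, split off the trace-free parts $C_j^0=C_j-\tfrac12(\tr C_j)I$. The square of the half-gap is then $-\det(C_1^0\cos\theta+C_2^0\sin\theta)$. For a trace-free Hermitian matrix $N$ one has $-\det N=\tfrac12\tr(N^2)$. The quantity
\[
 \tfrac12\tr\bigl((C_1^0\cos\theta+C_2^0\sin\theta)^2\bigr)
\]
is a real quadratic form in $\omega_\theta=(\cos\theta,\sin\theta)$, namely $\omega_\theta^{\mathsf T}Q\omega_\theta$ with
\[
 Q=\tfrac12\begin{pmatrix}\tr((C_1^0)^2)&\tr(C_1^0C_2^0)\\ \tr(C_1^0C_2^0)&\tr((C_2^0)^2)\end{pmatrix}.
\]
The entry $\tr(C_1^0C_2^0)$ is real because both factors are Hermitian. $Q$ is a Gram matrix for the real inner product $\langle X,Y\rangle=\tfrac12\tr(XY)$ on Hermitian matrices, so it is real positive semidefinite. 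This gives \eqref{eq:matrix-eigen-endpoints}.

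Next, the projection endpoints of $E_C=c+Q^{1/2}\overline{\mathbb D_1}$ are $c\cdot\omega\pm|Q^{1/2}\omega|=c\cdot\omega\pm(\omega^{\mathsf T}Q\omega)^{1/2}$. This is \eqref{eq:ellipse-endpoints}, which holds verbatim for semidefinite $Q$. So the eigenvalues are exactly the projection endpoints of $E_C$.

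Finally, suppose the two eigenvalues are distinct for every $\theta$. Then $\omega^{\mathsf T}Q\omega>0$ on all of $\mathbb S^1$, so $Q$ is positive definite and $E_C$ is an ellipse. No step here is a genuine obstacle. The only points needing care are the identity $-\det N=\tfrac12\tr N^2$ for trace-free $N$, and the remark that $Q$ is real symmetric positive semidefinite because it is a Gram matrix of the real-linear span of two Hermitian matrices.
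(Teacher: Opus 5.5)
Your proposal is correct and essentially identical to the paper's proof. Your $c=\tfrac12(\tr C_1,\tr C_2)$ and trace-free parts $C_j^0$ coincide with the paper's $c=(\Real\tau,\Imag\tau)$, $\Real(C-\tau I)$ and $\Imag(C-\tau I)$, so you obtain the same $Q$. The differences are cosmetic: you pass through the general $2\times2$ eigenvalue formula and the identity $-\det N=\tfrac12\tr(N^2)$, and you get semidefiniteness from a Gram-matrix argument, whereas the paper uses the pointwise bound $\tfrac12\tr\bigl((C_1\cos\theta+C_2\sin\theta)^2\bigr)\ge0$.
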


\begin{proof}
Put
\[
 \tau=\frac12\tr C,
 \qquad
 c=(\Real\tau,\Imag\tau),
\]
and define the traceless Hermitian matrices
\[
 C_1=\Real(C-\tau I),
 \qquad
 C_2=\Imag(C-\tau I).
\]
Then
\[
 \Real(e^{-i\theta}C)
 =(c\cdot\omega_\theta)I+C_1\cos\theta+C_2\sin\theta.
\]
A traceless $2\times2$ Hermitian matrix $\Xi$ has eigenvalues
\[
 \pm\left(\frac12\tr(\Xi^2)\right)^{1/2}.
\]
Set
\begin{equation*}
 Q=
 \begin{pmatrix}
 \frac12\tr(C_1^2)&\frac12\tr(C_1C_2)\\
 \frac12\tr(C_1C_2)&\frac12\tr(C_2^2)
 \end{pmatrix}.
\end{equation*}
For $\omega_\theta=(\cos\theta,\sin\theta)$,
\[
 \omega_\theta^{\mathsf T}Q\omega_\theta
 =\frac12\tr\bigl((C_1\cos\theta+C_2\sin\theta)^2\bigr)\ge0.
\]
This proves \eqref{eq:matrix-eigen-endpoints}. If the eigenvalues are distinct for every $\theta$, the last quadratic form is strictly positive on $\mathbb S^1$, so $Q$ is positive definite.
\end{proof}

\begin{theorem}[Spectral-gap theorem]\label{thm:operator-ellipse}
Let $\mathbf A$ be a bounded operator on a nonzero complex Hilbert space $\mathcal H$. Let $a_1,a_0,b_0,b_1:\R\to\R$ be arbitrary functions, and suppose
\begin{equation}\label{eq:operator-band}
 \sigma\bigl(\Real(e^{-i\theta}\mathbf A)\bigr)
 \subseteq[a_1(\theta),b_1(\theta)]
       \setminus(a_0(\theta),b_0(\theta))
\end{equation}
for every $\theta$. Suppose also that some $z_*\in\C$ satisfies
\begin{equation}\label{eq:gap-point}
 a_0(\theta)<\Real(e^{-i\theta}z_*)<b_0(\theta)
 \qquad(\theta\in\R).
\end{equation}
Then there is a matrix $C\in \C^{2\times2}$ such that, if
\[
 \lambda_-(\theta)<\lambda_+(\theta)
\]
are the two eigenvalues of $\Real(e^{-i\theta}C)$, then
\begin{equation}\label{eq:matrix-band-sandwich}
 a_1(\theta)\le\lambda_-(\theta)\le a_0(\theta)
 <b_0(\theta)\le\lambda_+(\theta)\le b_1(\theta)
\end{equation}
for every $\theta$.
\end{theorem}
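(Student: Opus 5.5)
The plan is to extract $C$ as the compression $C=P_{\mathcal V}\mathbf A|_{\mathcal V}$ of $\mathbf A$ to a suitable two-dimensional subspace $\mathcal V\subseteq\mathcal H$, identified with $\C^2$ through an orthonormal basis. Since $\Real$ commutes with compression, $\Real(e^{-i\theta}C)$ is the compression of $\Real(e^{-i\theta}\mathbf A)$. So the two outer inequalities in \eqref{eq:matrix-band-sandwich} come for free. The eigenvalues of a compression lie in the numerical range of $\Real(e^{-i\theta}\mathbf A)$. That numerical range lies in the closed convex hull of its spectrum, which by \eqref{eq:operator-band} is contained in $[a_1(\theta),b_1(\theta)]$. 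The whole difficulty is the inner part, $\lambda_-(\theta)\le a_0(\theta)<b_0(\theta)\le\lambda_+(\theta)$. For a Hermitian $2\times2$ matrix $K$, this is equivalent to $\det(K-t)\le0$ at $t=a_0(\theta)$ and at $t=b_0(\theta)$. Equivalently, $K-t$ is singular or indefinite, that is, the form $\langle(\Real(e^{-i\theta}\mathbf A)-t)v,v\rangle$ takes both signs, or vanishes with a kernel, on $\mathcal V$.

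To produce $\mathcal V$, first translate by the gap point. Put $\mathbf A'=\mathbf A-z_*$ and $r(\theta)=\Real(e^{-i\theta}z_*)$. Then $H_\theta:=\Real(e^{-i\theta}\mathbf A')=H_0\cos\theta+H_1\sin\theta$, with $H_0=\Real\mathbf A'$ and $H_1=\Imag\mathbf A'$. By \eqref{eq:gap-point}, $0$ lies in the open gap $(a_0-r,b_0-r)$, which is disjoint from $\sigma(H_\theta)$, so $H_\theta$ is invertible for every $\theta$. Lemma~\ref{lem:neutral} then supplies the complementary neutral subspaces $\mathcal M=\ran P_+$ and $W=\ran P_-$ from its proof, together with $H_\theta\mathcal M=\mathcal M^\perp$. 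Take $\mathcal V=\operatorname{span}\{e,w\}$ with unit vectors $e\in\mathcal M$ and $w\in W$. On such a plane the compression of $H_\theta$ has zero diagonal, so it is $\begin{pmatrix}0&\beta\\\overline\beta&0\end{pmatrix}$ in a basis adapted to $e,w$ (after accounting for the non-orthogonality of $e,w$). Since $H_\theta$ is invertible and $\mathcal M\dotplus W=\mathcal H$, the pairing between $\mathcal M$ and $W$ induced by $H_\theta$ is nondegenerate. Hence I would choose $w$ with $\langle H_\theta w,e\rangle\ne0$, and first check that this can be done uniformly in $\theta$ because the dependence is trigonometric in $\theta$. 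This makes the compressed form strictly indefinite at $t=0$, and the compression then has eigenvalues of opposite signs. Undoing the shift gives $\lambda_-(\theta)<r(\theta)<\lambda_+(\theta)$.

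The main obstacle is upgrading the statement ``$r(\theta)$ separates the eigenvalues'' to ``the whole interval $(a_0(\theta),b_0(\theta))$ separates them''. The point $z_*$ is only one point in the gap, while the endpoints $a_0,b_0$ are arbitrary functions. My first attempt is to exploit that $z_*$ is not special. For each $\theta$ and each $t$ in the gap, the operator $\Real(e^{-i\theta}\mathbf A)-t$ is invertible, and one wants the two-dimensional $\mathcal V$ to stay indefinite for it. I would choose $e,w$ as extremal vectors realising the spectral gap, approximately in the spectral subspaces of $H_\theta$ for $(-\infty,a_0-r]$ and $[b_0-r,\infty)$. I would then use the neutrality identities $P_+^*H_\theta P_+=0=P_-^*H_\theta P_-$ to show that the form on $\mathcal V$ has the shape $2\Real(s\beta)-t'\|e+sw\|^2$. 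The goal is to show that this takes both signs for every $t'$ in the shifted gap, using the determinant identity of Lemma~\ref{lem:matrix-ellipse} to read off the eigenvalues. If this direct route fails, I would fall back on a compactness argument. I would first prove the claim with $\mathcal H$ replaced by finite-rank compressions, take a limit of the resulting $2\times2$ matrices (the set of matrices of norm at most $\|\mathbf A\|$ is compact), and note that the closed conditions $\det(\Real(e^{-i\theta}C)-a_0(\theta))\le0$ and $\det(\Real(e^{-i\theta}C)-b_0(\theta))\le0$ pass to limits. The strict inequality $\lambda_-<\lambda_+$ is automatic from $a_0<b_0$.
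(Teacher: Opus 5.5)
Two things in your proposal are fine. Compressing gives the outer inequalities for free. The neutral plane $\operatorname{span}\{e,w\}$, with $e\in\mathcal M$, $w\in W$ and $\theta\mapsto\langle H_\theta w,e\rangle$ nonvanishing (such $w$ exists), gives $\lambda_-(\theta)<r(\theta)<\lambda_+(\theta)$.

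But the argument stops there. The inner inequalities $\lambda_-\le a_0$ and $b_0\le\lambda_+$ are the actual content of the theorem, and neither route you sketch supplies them.

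\textbf{The direct route.} The spectral subspaces of $H_\theta$ for $(-\infty,a_0-r]$ and $[b_0-r,\infty)$ move with $\theta$, while $\mathcal V$ may not. Worse, every $e\in\mathcal M$ or $w\in W$ satisfies $\langle H_\theta e,e\rangle=0$. Its two spectral components therefore have comparable norms, so it cannot lie near either spectral subspace. Nothing in the proposal bounds $|\langle H_\theta w,e\rangle|$ from below in terms of the gap, and that is what indefiniteness for every $t'$ in the shifted gap would need.

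\textbf{The compactness fallback.} This is circular. A compression $P_{\mathcal F}\mathbf A|_{\mathcal F}$ need not satisfy the gap hypothesis. Compressing $\operatorname{diag}(-1,1)$ to the span of $(1,1)$ puts the eigenvalue $0$ in the gap. If $\dim\mathcal F$ is odd, $\theta\mapsto\det\Real(e^{-i\theta}(P_{\mathcal F}\mathbf A|_{\mathcal F}-z_*))$ changes sign under $\theta\mapsto\theta+\pi$, so no gap point exists at all. In any case the finite-dimensional case is the same problem.

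\textbf{The missing idea.} What is missing is a Schur-complement comparison, and it requires giving up compressions. Relative to $\mathcal M\oplus\mathcal M^\perp$,
$\Real(e^{-i\theta}\mathbf A)=\begin{pmatrix}r(\theta)I&Y_\theta\\ Y_\theta^*&\mathcal D_\theta\end{pmatrix}$,
and $Y_\theta$ is boundedly invertible; this is what $H_\theta\mathcal M=\mathcal M^\perp$ gives. The Schur complement $\mathcal S(t)=\mathcal D_\theta-t-Y_\theta^*Y_\theta/(r(\theta)-t)$ is invertible exactly when $\Real(e^{-i\theta}\mathbf A)-t$ is.

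Since $Y_\theta^*Y_\theta\ge\kappa_0>0$, $\mathcal S(t)$ is negative just left of $r(\theta)$ and positive just right of it. By invertibility in the gap and continuity, it stays negative on $(a_0,r(\theta))$ and positive on $(r(\theta),b_0)$. It is also positive below $\min\sigma(\Real(e^{-i\theta}\mathbf A))$ and negative above $\max\sigma(\Real(e^{-i\theta}\mathbf A))$.

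Now test on a single unit $w\in\mathcal M^\perp$, not in $W$. This gives the scalar $\langle\mathcal D_\theta w,w\rangle-t-\|Y_\theta w\|^2/(r(\theta)-t)$. Write $\mathbf A_{12},\mathbf A_{21},\mathbf A_{22}$ for the blocks of $\mathbf A$. The paper takes $C=\begin{pmatrix}z_*&\beta\\ \gamma&d\end{pmatrix}$ with $d=\langle\mathbf A_{22}w,w\rangle$, $|\beta|^2+|\gamma|^2=\|\mathbf A_{12}w\|^2+\|\mathbf A_{21}^*w\|^2$ and $\beta\gamma=\langle\mathbf A_{12}w,\mathbf A_{21}^*w\rangle$. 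This is solvable by Cauchy--Schwarz. Then the off-diagonal entry of $\Real(e^{-i\theta}C)$ has modulus exactly $\|Y_\theta w\|$ for every $\theta$. So the scalar equals $(\lambda_--t)(\lambda_+-t)/(r(\theta)-t)$, and its sign pattern yields all of \eqref{eq:matrix-band-sandwich}.

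\textbf{Why compression is the wrong ansatz here.} This $C$ is generally not a compression of $\mathbf A$. Compressing to $\operatorname{span}\{m,w\}$ with $m\in\mathcal M$ gives the off-diagonal entry $\langle Y_\theta w,m\rangle$. Here $|\langle Y_\theta w,m\rangle|\le\|Y_\theta w\|$, with equality for all $\theta$ only if $\mathbf A_{12}w,\mathbf A_{21}^*w\in\C m$. A smaller off-diagonal entry pulls both eigenvalues toward the diagonal entries, which is exactly the wrong direction for the inner inequalities. You would need a genuinely new argument that some two-dimensional compression always works, and the proposal contains none.
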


\begin{proof}
Set
\[
 G_0=\Real(\mathbf A-z_*I),
 \qquad
 G_1=\Imag(\mathbf A-z_*I).
\]
By \eqref{eq:operator-band} and \eqref{eq:gap-point}, the self-adjoint pencil
\[
 G_0\cos\theta+G_1\sin\theta
 =\Real(e^{-i\theta}(\mathbf A-z_*I))
\]
is invertible for every $\theta$. Apply Lemma~\ref{lem:neutral}. Its neutrality conclusion at $\theta=0$ and $\theta=\pi/2$ gives
\[
 P_{\mathcal M}(\mathbf A-z_*I)|_{\mathcal M}=0.
\]
Relative to the orthogonal decomposition $\mathcal H=\mathcal M\oplus\mathcal N$, where $\mathcal N=\mathcal M^\perp$, the operator $\mathbf A$ therefore has the block form
\begin{equation*}
 \mathbf A=
 \begin{pmatrix}
 z_*I&\mathbf A_{12}\\ \mathbf A_{21}&\mathbf A_{22}
 \end{pmatrix},
 \qquad
 \mathbf A_{12}\in\mathcal B(\mathcal N,\mathcal M),\quad
 \mathbf A_{21}\in\mathcal B(\mathcal M,\mathcal N),\quad
 \mathbf A_{22}\in\mathcal B(\mathcal N).
\end{equation*}
For
\[
 H_\theta=\Real(e^{-i\theta}\mathbf A),
 \qquad
 \alpha_\theta=\Real(e^{-i\theta}z_*),
\]
write
\begin{equation}\label{eq:H-block}
 H_\theta=
 \begin{pmatrix}
 \alpha_\theta I&Y_\theta\\
 Y_\theta^*&\mathcal D_\theta
 \end{pmatrix},
 \qquad
 Y_\theta=\frac12(e^{-i\theta}\mathbf A_{12}+e^{i\theta}\mathbf A_{21}^*)\in\mathcal B(\mathcal N,\mathcal M),
 \qquad
 \mathcal D_\theta=\Real(e^{-i\theta}\mathbf A_{22}).
\end{equation}
For $m\in\mathcal M$, the block form gives
\[
 (H_\theta-\alpha_\theta I)(m,0)=(0,Y_\theta^*m).
\]
The second conclusion of Lemma~\ref{lem:neutral}, applied to $H_\theta-\alpha_\theta I$, says that the left-hand side ranges over $\mathcal N$, so $Y_\theta^*:\mathcal M\to\mathcal N$ is onto. Since $H_\theta-\alpha_\theta I$ is invertible, $Y_\theta^*$ is also one-to-one. The bounded inverse theorem therefore makes $Y_\theta^*$ boundedly invertible, and hence its adjoint $Y_\theta:\mathcal N\to\mathcal M$ is boundedly invertible as well. Moreover, $\mathcal N\ne\{0\}$: the space $\mathcal M$ is nonzero and the invertible operator $H_\theta-\alpha_\theta I$ maps $\mathcal M$ onto $\mathcal N$.

Choose a unit vector $w\in\mathcal N$ and set
\[
 d=\langle\mathbf A_{22}w,w\rangle,
 \qquad
 r=\|\mathbf A_{12}w\|^2,
 \qquad
 s=\|\mathbf A_{21}^*w\|^2,
 \qquad
 \zeta=\langle\mathbf A_{12}w,\mathbf A_{21}^*w\rangle.
\]
By Cauchy--Schwarz,
\[
 2|\zeta|\le r+s.
\]
Therefore one can choose $\beta,\gamma\in\C$ such that
\begin{equation}\label{eq:beta-gamma}
 |\beta|^2+|\gamma|^2=r+s,
 \qquad
 \beta\gamma=\zeta.
\end{equation}
For instance, choose nonnegative numbers $\rho_1,\rho_2$ with $\rho_1+\rho_2=r+s$ and $\rho_1\rho_2=|\zeta|^2$, and then choose phases so that $\beta\gamma=\zeta$. Define the auxiliary matrix
\begin{equation*}
 C=
 \begin{pmatrix}
 z_*&\beta\\
 \gamma&d
 \end{pmatrix}.
\end{equation*}
The off-diagonal entry $x_\theta$ of $\Real(e^{-i\theta}C)$ is
\[
 x_\theta=\frac12(e^{-i\theta}\beta+e^{i\theta}\overline\gamma).
\]
Equations~\eqref{eq:H-block} and \eqref{eq:beta-gamma} give the identity
\begin{equation}\label{eq:offdiag-identity}
 |x_\theta|^2=\|Y_\theta w\|^2
 \qquad(\theta\in\R).
\end{equation}
In particular, $x_\theta\ne0$ for every $\theta$, so the two eigenvalues of $\Real(e^{-i\theta}C)$ are distinct and strictly straddle $\alpha_\theta$.

Fix $\theta$ and abbreviate $H=H_\theta$, $\alpha=\alpha_\theta$, $Y=Y_\theta$, and $\mathcal D=\mathcal D_\theta$. For $t\ne\alpha$, define the self-adjoint Schur complement on $\mathcal N$ by
\begin{equation*}
 \mathcal S(t)=\mathcal D-tI_{\mathcal N}-\frac{Y^*Y}{\alpha-t}.
\end{equation*}
Here $I_{\mathcal N}$ denotes the identity operator on $\mathcal N$, and $\mathcal S(t)>0$ and $\mathcal S(t)<0$ mean, respectively, $\mathcal S(t)\ge\varepsilon I_{\mathcal N}$ and $\mathcal S(t)\le-\varepsilon I_{\mathcal N}$ for some $\varepsilon>0$. The factorization
\begin{equation}\label{eq:block-factorization}
 H-tI=
 \begin{pmatrix}I&0\\ Y^*/(\alpha-t)&I\end{pmatrix}
 \begin{pmatrix}(\alpha-t)I&0\\0&\mathcal S(t)\end{pmatrix}
 \begin{pmatrix}I&Y/(\alpha-t)\\0&I\end{pmatrix}
\end{equation}
shows that $H-tI$ is invertible if and only if $\mathcal S(t)$ is invertible.

Because $Y$ is boundedly invertible, there is $\kappa_0>0$ such that $Y^*Y\ge\kappa_0 I_{\mathcal N}$. Since $\mathcal D-tI_{\mathcal N}$ remains bounded as $t\to\alpha$, it follows that $\mathcal S(t)<0$ for $t<\alpha$ sufficiently close to $\alpha$, and $\mathcal S(t)>0$ for $t>\alpha$ sufficiently close to $\alpha$. The interval $(a_0(\theta),b_0(\theta))$ contains no spectrum of $H$, so \eqref{eq:block-factorization} makes $\mathcal S(t)$ a norm-continuous path of invertible self-adjoint operators on each of $(a_0(\theta),\alpha)$ and $(\alpha,b_0(\theta))$. The extremal spectral values of a self-adjoint operator (the minimum and the maximum of its spectrum) are norm-continuous; hence a path that is strictly negative or positive at one point cannot change sign while remaining invertible. Consequently,
\begin{equation}\label{eq:schur-gap-sign}
 \mathcal S(t)<0\quad(a_0(\theta)<t<\alpha),
 \qquad
 \mathcal S(t)>0\quad(\alpha<t<b_0(\theta)).
\end{equation}
If $t<\min\sigma(H)$, then $H-tI>0$, and the congruence \eqref{eq:block-factorization} gives $\mathcal S(t)>0$. Similarly,
\begin{equation}\label{eq:schur-outer-sign}
 \mathcal S(t)>0\quad(t<\min\sigma(H)),
 \qquad
 \mathcal S(t)<0\quad(t>\max\sigma(H)).
\end{equation}

Let $\delta_\theta=\Real(e^{-i\theta}d)=\langle\mathcal Dw,w\rangle$. For $t\ne\alpha$, \eqref{eq:offdiag-identity} and the characteristic polynomial of $\Real(e^{-i\theta}C)$ give
\begin{equation}\label{eq:scalar-schur}
 \langle\mathcal S(t)w,w\rangle
 =\delta_\theta-t-\frac{|x_\theta|^2}{\alpha-t}
 =\frac{(\lambda_-(\theta)-t)(\lambda_+(\theta)-t)}{\alpha-t}.
\end{equation}
Since $x_\theta\ne0$, one has $\lambda_-(\theta)<\alpha<\lambda_+(\theta)$. Thus the scalar in \eqref{eq:scalar-schur} is positive on $(-\infty,\lambda_-(\theta))$, negative on $(\lambda_-(\theta),\alpha)$, positive on $(\alpha,\lambda_+(\theta))$, and negative on $(\lambda_+(\theta),\infty)$. Moreover, for every unit vector $e\in\mathcal M$, $\langle He,e\rangle=\alpha$, so
\[
 \min\sigma(H)\le\alpha\le\max\sigma(H).
\]
Comparing the scalar sign pattern with \eqref{eq:schur-gap-sign} and \eqref{eq:schur-outer-sign} yields
\[
 \min\sigma(H)\le\lambda_-(\theta)\le a_0(\theta),
 \qquad
 b_0(\theta)\le\lambda_+(\theta)\le\max\sigma(H).
\]
Finally, \eqref{eq:operator-band} gives
\[
 a_1(\theta)\le\min\sigma(H),
 \qquad
 \max\sigma(H)\le b_1(\theta),
\]
which proves \eqref{eq:matrix-band-sandwich}.
\end{proof}

\section{Necessity and the main theorem}

\begin{proposition}[Necessary ellipse]\label{prop:ellipse}
Let $K_0\subseteq\R^2$ be convex with nonempty interior, and let $K_1\subseteq\R^2$ be compact and convex. Suppose that $0\ne v\in L^1(\R^2)$, $\supp v\subseteq K_1$, and
\begin{equation}\label{eq:necessary-vanishing}
 \Rad v(\omega,p)=0
 \quad\text{for almost every }(\omega,p)\text{ with }
 L(\omega,p)\cap K_0\ne\varnothing.
\end{equation}
Then there is an ellipse $E$ such that
\begin{equation*}
 \overline{K_0}\subseteq E\subseteq K_1.
\end{equation*}
\end{proposition}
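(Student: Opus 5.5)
We chain the four stages: Lemma~\ref{lem:transfer}, Lemma~\ref{lem:hardy-model}, Theorem~\ref{thm:operator-ellipse} and Lemma~\ref{lem:matrix-ellipse}, then convert eigenvalue bounds into set inclusions via \eqref{eq:support-inclusion}.

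\textbf{Reduction to bounded $K_0$.} Since $v\ne0$ has compact support in $K_1$, first observe that $K_0\subseteq K_1$ essentially. More precisely, we may replace $K_0$ by a bounded open convex subset $K_0'=\operatorname{int}K_0\cap\mathbb D_R$. The required conclusion for $K_0$ follows from the one for every such $K_0'$ together with compactness of the family of ellipses contained in $K_1$; alternatively one shows directly that $K_0$ must be bounded. Rather than the limit, it is cleaner to take $K_0'=\operatorname{int}K_0$ once boundedness is known. Boundedness should follow from the final sandwich applied to truncations: each truncation lies in an ellipse inside $K_1$, hence in $K_1$, so $\operatorname{int}K_0\subseteq K_1$ is bounded. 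Since $K_0$ is convex with nonempty interior, $\overline{\operatorname{int}K_0}=\overline{K_0}$, and lines meeting $\operatorname{int}K_0$ meet $K_0$. So we may assume $K_0$ is bounded, open and convex.

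\textbf{Moment data.} Set $h(\theta,p)=\Rad v(\omega_\theta,p)$. This is in $L^1$ and satisfies \eqref{eq:line-symmetry}. It vanishes for $p\notin[a_{K_1},b_{K_1}]$ because $\supp v\subseteq K_1$, and it vanishes a.e.\ for $p\in(a_{K_0},b_{K_0})$ by \eqref{eq:necessary-vanishing}, so \eqref{eq:transfer-support} holds. The Helgason--Ludwig moment conditions hold for $L^1$ functions with compact support: $h_j(\theta)=\int(x\cdot\omega_\theta)^jv\,dx$ is a homogeneous polynomial of degree $j$ in $\cos\theta,\sin\theta$, so it has the required frequencies. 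Moreover $h\ne0$, since the Radon transform is injective on compactly supported $L^1$ functions (all moments determine $v$, or use the Fourier slice theorem). Lemma~\ref{lem:transfer} then gives $\eta$, and Lemma~\ref{lem:hardy-model} gives $\mathcal E\ne0$ and $A$ with
\[
\sigma\bigl(\Real(e^{-i\theta}\,2A)\bigr)\subseteq X_\theta=[a_1,b_1]\setminus(a_0,b_0),
\]
where $a_j,b_j$ are evaluated at $\omega_\theta$, since $e^{-i\theta}A+e^{i\theta}A^*=\Real(e^{-i\theta}2A)$.

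\textbf{Spectral gap to ellipse.} Take $\mathbf A=2A$ and let $z_*$ be any interior point $x_*\in K_0$, identified with $\C$. Then $\Real(e^{-i\theta}z_*)=x_*\cdot\omega_\theta$, which lies strictly inside $(a_0,b_0)$ because $K_0$ is open, so \eqref{eq:gap-point} holds. Theorem~\ref{thm:operator-ellipse} yields $C$ whose eigenvalues satisfy \eqref{eq:matrix-band-sandwich} and are distinct. Lemma~\ref{lem:matrix-ellipse} makes $E_C$ a genuine ellipse with $a_E=\lambda_-$ and $b_E=\lambda_+$. Then \eqref{eq:support-inclusion} gives $\overline{K_0}\subseteq E\subseteq K_1$, using that $K_1$ is compact convex.

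\textbf{Main obstacle.} The delicate step is verifying the hypotheses of Lemma~\ref{lem:transfer} for a general $L^1$ function: the a.e.\ support condition, the moment identity via Fubini, and nonvanishing of $h$. The boundedness reduction for $K_0$ is a lesser bookkeeping issue.
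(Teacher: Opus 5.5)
Your proposal is correct and follows the paper's proof essentially step for step. It verifies the hypotheses of Lemma~\ref{lem:transfer} (line symmetry, a.e.\ support in $X_\theta$, Helgason--Ludwig moments, nonvanishing via the Fourier slice identity), then applies Lemma~\ref{lem:hardy-model} with $\mathbf A=2A$, Theorem~\ref{thm:operator-ellipse} with $z_*$ an interior point of $K_0$, Lemma~\ref{lem:matrix-ellipse}, and finally \eqref{eq:support-inclusion}. The only cosmetic difference is in the unbounded case: you exhaust $\operatorname{int}K_0$ by the truncations $\operatorname{int}K_0\cap\mathbb D_R$, while the paper uses small open disks $U_x\subseteq K_0$; both arguments show $K_0\subseteq K_1$ is bounded before applying the bounded case to $K_0$ itself.
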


\begin{proof}
Replacing $K_0$ by its interior preserves its closure, so assume $K_0$ is open. First suppose it is bounded; $K_1$ is nonempty since $v\ne0$. Choose a Borel representative of $v$, equal to the original class almost everywhere, that vanishes pointwise outside $K_1$. Fubini's theorem in line coordinates shows that this replacement does not change the joint Radon $L^1$-class. Define
\begin{equation*}
 h(\theta,p)=\int_\R
 v(p\omega_\theta+t\omega_\theta^\perp)\,dt,
\end{equation*}
setting the value to zero where the integral is not absolutely convergent. Tonelli's theorem shows that $h$ is jointly measurable and
\[
 \int_\R|h(\theta,p)|\,dp\le\|v\|_{L^1(\R^2)}.
\]
By the parametrization $x=p\omega_\theta+t\omega_\theta^\perp$ of $L(\omega_\theta,p)$, one has $h(\theta,p)=\Rad v(\omega_\theta,p)$ for almost every $(\theta,p)$. The function $h$ satisfies
\begin{equation}\label{eq:radon-line-symmetry}
 h(\theta+\pi,-p)=h(\theta,p)
\end{equation}
for almost every $(\theta,p)$.

The function $h$ is nonzero in $L^1([0,2\pi)\times\R)$. With the convention
\[
 \widehat v(\xi)=\int_{\R^2}e^{-ix\cdot\xi}v(x)\,dx,
\]
Fubini's theorem gives the Fourier-slice identity
\[
 \int_\R e^{-isp}h(\theta,p)\,dp
 =\widehat v(s\omega_\theta).
\]
If $h=0$ almost everywhere, then $\widehat v$ vanishes on every radial line in a full-measure, hence dense, set of directions. Continuity gives $\widehat v\equiv0$, and injectivity of the Fourier transform on $L^1(\R^2)$ gives $v=0$, a contradiction.

In the notation of \eqref{eq:endpoints}, write
\[
 a_j(\theta)=a_{K_j}(\omega_\theta),
 \qquad
 b_j(\theta)=b_{K_j}(\omega_\theta),
 \qquad j=0,1.
\]
A line meets $K_0$ exactly when $a_0(\theta)<p<b_0(\theta)$, while a line with $p\notin[a_1(\theta),b_1(\theta)]$ misses $K_1$. Hence \eqref{eq:necessary-vanishing} and the support of $v$ give
\begin{equation*}
 h(\theta,p)=0
 \quad\text{unless}\quad
 p\in[a_1(\theta),b_1(\theta)]\setminus(a_0(\theta),b_0(\theta))
\end{equation*}
almost everywhere.

For every $j\ge0$, boundedness of $K_1$ gives absolute convergence of the moments, and Fubini's theorem yields
\begin{equation}\label{eq:radon-moment-general}
 h_j(\theta):=\int_\R p^jh(\theta,p)\,dp
 =\int_{\R^2}(x\cdot\omega_\theta)^jv(x)\,dx.
\end{equation}
Expanding $x\cdot\omega_\theta$ in powers of $e^{i\theta}$ and $e^{-i\theta}$ shows that $h_j$ contains only frequencies $e^{ik\theta}$ with $|k|\le j$ and $k\equiv j\pmod2$. The line-symmetry relation \eqref{eq:radon-line-symmetry}, the preceding support restriction, and the moment identity \eqref{eq:radon-moment-general} verify the hypotheses of Lemma~\ref{lem:transfer}, which gives a nonzero measure $\eta$ on $X(K_0,K_1)$ satisfying \eqref{eq:annihilation}.

Apply Lemma~\ref{lem:hardy-model}. There are a nonzero complex Hilbert space $\mathcal E$ and $A\in\mathcal B(\mathcal E)$ such that
\begin{equation}\label{eq:model-band-raw}
 \sigma(e^{-i\theta}A+e^{i\theta}A^*)
 \subseteq[a_1(\theta),b_1(\theta)]
       \setminus(a_0(\theta),b_0(\theta)).
\end{equation}
Put $\mathbf A=2A$. Then the left-hand side of \eqref{eq:model-band-raw} is
\[
 \sigma(\Real(e^{-i\theta}\mathbf A)).
\]
Choose any $x_*\in K_0$ and identify it with $z_*=x_{*,1}+ix_{*,2}\in\C$. Since $K_0$ is open,
\begin{equation*}
 a_0(\theta)<x_*\cdot\omega_\theta
 =\Real(e^{-i\theta}z_*)<b_0(\theta)
\end{equation*}
for every $\theta$. Theorem~\ref{thm:operator-ellipse} therefore gives a matrix $C\in \C^{2\times2}$ such that the eigenvalues $\lambda_-(\theta)<\lambda_+(\theta)$ of $\Real(e^{-i\theta}C)$ satisfy
\begin{equation}\label{eq:final-projection-band}
 a_1(\theta)\le\lambda_-(\theta)\le a_0(\theta)
 <b_0(\theta)\le\lambda_+(\theta)\le b_1(\theta).
\end{equation}
By Lemma~\ref{lem:matrix-ellipse}, these two functions are the projection endpoints of an ellipse $E$. Equation~\eqref{eq:final-projection-band} and the support-function criterion \eqref{eq:support-inclusion} give
\[
 \overline{K_0}\subseteq E\subseteq K_1.
\]
For general $K_0$, choose for each $x\in K_0$ an open disk $U_x$ with $x\in U_x\subseteq K_0$. The bounded case applied to $U_x$ gives $U_x\subseteq K_1$. Hence $K_0\subseteq K_1$ is bounded, and the bounded case applies to $K_0$ itself.
\end{proof}

\begin{proof}[Proof of Theorem~\ref{thm:main}]
Proposition~\ref{prop:sufficiency} shows that (iii) implies (i), with the stated reality, symmetry, and sign properties. Condition (i) implies (ii), because a compactly supported smooth function is integrable and its essential support equals its ordinary support.

Suppose (ii) holds, so $D\ne\varnothing$. If $D_0=\varnothing$, any sufficiently small closed disk in $D$ satisfies (iii). Otherwise, let
\[
 K_1=\co(\supp u).
\]
The set $K_1$ is nonempty, compact, and convex. Since $D$ is convex, $K_1\subseteq D$; compactness then gives $K_1\Subset D$.

Proposition~\ref{prop:ellipse}, applied with $K_0=D_0$ and $v=u$, produces an ellipse satisfying
\[
 \overline{D_0}\subseteq E\subseteq K_1\Subset D.
\]
Thus (ii) implies (iii). Since (iii) gives $D_0\Subset D$, so do (i) and (ii).
\end{proof}

\section{The square threshold and earlier claims}

\begin{corollary}[Two concentric squares]\label{cor:squares}
Let $\delta>0$ and set
\[
 D=(-1,1)^2,
 \qquad
 D_0=(-\delta,\delta)^2.
\]
There is a nonzero $u\in C_c^\infty(D)$ satisfying \eqref{eq:smooth-vanishing}, or equivalently a nonzero $u\in L^1(\R^2)$ with $\supp u\Subset D$ satisfying \eqref{eq:ae-vanishing}, if and only if
\[
 \delta<\frac1{\sqrt2}.
\]
\end{corollary}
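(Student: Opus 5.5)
By Theorem~\ref{thm:main}, both existence statements are equivalent to condition (iii). So the corollary reduces to a purely geometric claim: an ellipse $E$ with $[-\delta,\delta]^2\subseteq E\Subset(-1,1)^2$ exists if and only if $\delta<1/\sqrt2$. I will prove the two directions separately.

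\emph{Sufficiency.} Suppose $\delta<1/\sqrt2$. Choose $r$ with $\sqrt2\,\delta\le r<1$ and take the disk $E=\overline{\mathbb D_r}$. Every point of $[-\delta,\delta]^2$ has norm at most $\sqrt2\,\delta\le r$, so $\overline{D_0}\subseteq E$. Since $r<1$, the disk $E$ is a compact subset of the open square. This gives (iii).

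\emph{Necessity.} Suppose an ellipse $E=c+Q^{1/2}\overline{\mathbb D_1}$ satisfies the sandwich. I first reduce to a centered ellipse, by symmetrizing.
\begin{itemize}
\item The reflection $x\mapsto -x$ preserves both squares, so $-E$ also satisfies the sandwich.
\item By \eqref{eq:ellipse-endpoints}, averaging the support functions of $E$ and $-E$ gives exactly the support function $(\omega^{\mathsf T}Q\omega)^{1/2}$ of the centered ellipse $E'=Q^{1/2}\overline{\mathbb D_1}$. So $E'$ is the Minkowski average $\tfrac12(E+(-E))$.
\item The interval inclusions of \eqref{eq:support-inclusion} are preserved under averaging. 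Strictness on the outside is also preserved, since $E'\subseteq\tfrac12(E+(-E))$ and the open square is convex, so $E'\Subset D$.
\end{itemize}

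Similarly, the reflection $(x_1,x_2)\mapsto(x_2,x_1)$ preserves both squares. Averaging $Q$ with its conjugate by this swap corresponds to averaging the quadratic forms. The square root of the averaged form dominates the average of the square roots by concavity, so it still lies above the support function of $D_0$. But now the outer bound is in danger.

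To avoid this concavity issue, I would instead argue directly with support values in two directions. Containment of the corner $(\delta,\delta)$ in $E'$ gives $(\delta,\delta)Q^{-1}(\delta,\delta)^{\mathsf T}\le1$. Containment of $E'$ in the open square gives $Q_{11}<1$ and $Q_{22}<1$, using the support values in the coordinate directions $e_1,e_2$.

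Write $Q=\begin{pmatrix}\alpha&\beta\\ \beta&\gamma\end{pmatrix}$. The corner $(\delta,-\delta)$ is also in $E'$, so adding the two corner inequalities gives
\[
\delta^2\bigl((1,1)Q^{-1}(1,1)^{\mathsf T}+(1,-1)Q^{-1}(1,-1)^{\mathsf T}\bigr)=2\delta^2\,\tr(Q^{-1})\le2.
\]
Hence $\delta^2\le 1/\tr(Q^{-1})$.

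It remains to bound $1/\tr(Q^{-1})$. Here $\tr(Q^{-1})=(\alpha+\gamma)/\det Q$ and $\det Q\le\alpha\gamma$. So
\[
\frac1{\tr(Q^{-1})}\le\frac{\alpha\gamma}{\alpha+\gamma}=\Bigl(\frac1\alpha+\frac1\gamma\Bigr)^{-1}<\frac12,
\]
because $\alpha,\gamma<1$. This gives $\delta^2<1/2$, that is, $\delta<1/\sqrt2$.

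The swap symmetrization is therefore unnecessary; only the central symmetrization is needed. The main step needing care is that central symmetrization: checking that the Minkowski average $E'$ is still compactly contained in the open square, which follows from convexity as noted above.
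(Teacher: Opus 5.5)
Your argument is correct. The reduction to condition~(iii) and the disk construction for sufficiency are exactly the paper's; only the necessity computation differs.

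The paper works only with support values and never centers the ellipse or inverts $Q$. The coordinate directions give $\sqrt{Q_{ii}}<1-|c_i|\le1$. The diagonal directions $\omega_\pm=(1,\pm1)/\sqrt2$ give $(\omega_\pm^{\mathsf T}Q\omega_\pm)^{1/2}\ge\delta\sqrt2+|c\cdot\omega_\pm|\ge\delta\sqrt2$. Summing over the orthonormal basis $\omega_\pm$ then yields $4\delta^2\le\tr Q<2$. In that formulation both constraints are linear in $Q$, and the center can only help. The same two lines are reused for Corollary~\ref{cor:closed} and for the equality case in Remark~\ref{rem:endpoint}.

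You instead use membership of the corners, i.e.\ the dual form $Q^{-1}$. This gives the sharper inequality $\delta^2\le 1/\tr(Q^{-1})=\det Q/\tr Q$, which is half the harmonic mean of the eigenvalues of $Q$, versus the paper's half arithmetic mean $\tr Q/4$. The price is the extra step $\det Q\le Q_{11}Q_{22}$ and the harmonic-mean estimate, needed to return to the diagonal entries controlled by the outer square.

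Your central symmetrization is valid but not needed. Summing $(x-c)^{\mathsf T}Q^{-1}(x-c)\le1$ over the four corners $x=(\pm\delta,\pm\delta)$ cancels the terms linear in $c$ and gives $\delta^2\tr(Q^{-1})+c^{\mathsf T}Q^{-1}c\le1$ directly. Meanwhile $Q_{ii}<(1-|c_i|)^2\le1$ already holds for the uncentered ellipse.
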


\begin{proof}
If $\delta<1/\sqrt2$, choose $r$ with $\delta\sqrt2<r<1$. The closed disk $\overline{\mathbb D_r}$ contains $\overline{D_0}$ and is contained in $D$ (Figure~\ref{fig:squares}, left), so Theorem~\ref{thm:main} applies.

Conversely, suppose an ellipse
\[
 E=c+Q^{1/2}\overline{\mathbb D_1}
\]
satisfies $\overline{D_0}\subseteq E\Subset D$. Projection onto the two coordinate axes gives, with $Q_{ij}$ and $c_i$ the entries of $Q$ and $c$,
\begin{equation}\label{eq:square-outer-coordinate}
 \sqrt{Q_{ii}}<1-|c_i|,
 \qquad i=1,2.
\end{equation}
In particular, $\tr Q<2$. For
\[
 \omega_+=\frac{(1,1)}{\sqrt2},
 \qquad
 \omega_-=\frac{(1,-1)}{\sqrt2},
\]
comparison of the projection intervals of $[-\delta,\delta]^2$ and $E$ gives
\[
 (\omega_\pm^{\mathsf T}Q\omega_\pm)^{1/2}
 \ge\delta\sqrt2+|c\cdot\omega_\pm|
 \ge\delta\sqrt2.
\]
Since $\omega_+,\omega_-$ are an orthonormal basis,
\[
 \tr Q
 =\omega_+^{\mathsf T}Q\omega_++\omega_-^{\mathsf T}Q\omega_-
 \ge4\delta^2.
\]
Thus $\delta\ge1/\sqrt2$ would give $\tr Q\ge2$, contradicting \eqref{eq:square-outer-coordinate}. Theorem~\ref{thm:main} completes the proof.
\end{proof}

\begin{figure}[t]
\centering
\begin{tikzpicture}[line cap=round, line join=round, scale=2.0]
\begin{scope}
\fill[black!8] (-1,-1) rectangle (1,1);
\fill[blue!10] (0,0) circle (0.92);
\fill[blue!22] (-0.6,-0.6) rectangle (0.6,0.6);
\draw[gray!70, densely dotted, line width=0.4pt] (0,0) circle (0.8485);
\draw[black, line width=0.5pt] (-1,-1) rectangle (1,1);
\draw[blue!45!black, line width=0.7pt] (0,0) circle (0.92);
\draw[black, line width=0.5pt] (-0.6,-0.6) rectangle (0.6,0.6);
\draw[blue!45!black, line width=0.4pt, -{Stealth[length=3.5pt]}]
 (0,0) -- (35:0.92) node[pos=0.60, above left, inner sep=0.5pt] {\scriptsize $r$};
\fill[blue!45!black] (0,0) circle (0.85pt);
\node at (-0.80,0.83) {\footnotesize $D$};
\node at (-0.20,-0.23) {\footnotesize $D_0$};
\node[blue!45!black] at (0,-0.76) {\footnotesize $E$};
\node at (0,-1.27) {\footnotesize $\delta<1/\sqrt2$};
\end{scope}
\begin{scope}[shift={(2.8,0)}]
\fill[black!8] (-1,-1) rectangle (1,1);
\fill[blue!10] (0,0) circle (1);
\fill[blue!22] (-0.7071,-0.7071) rectangle (0.7071,0.7071);
\draw[black, line width=0.5pt] (-1,-1) rectangle (1,1);
\draw[blue!45!black, line width=0.7pt] (0,0) circle (1);
\draw[black, line width=0.5pt] (-0.7071,-0.7071) rectangle (0.7071,0.7071);
\foreach \p in {(1,0),(-1,0),(0,1),(0,-1),
                (0.7071,0.7071),(0.7071,-0.7071),
                (-0.7071,0.7071),(-0.7071,-0.7071)}
 \fill[red!55!black] \p circle (1.1pt);
\node at (-0.84,0.86) {\footnotesize $D$};
\node at (0,0) {\footnotesize $D_0$};
\node[blue!45!black] at (0,-0.855) {\footnotesize $E$};
\node at (0,-1.27) {\footnotesize $\delta=1/\sqrt2$};
\end{scope}
\end{tikzpicture}
\caption{The square threshold of Corollary~\ref{cor:squares}, with $D=(-1,1)^2$ and $D_0=(-\delta,\delta)^2$. Left: for $\delta<1/\sqrt2$, a disk $E=\overline{\mathbb D_r}$ with $\delta\sqrt2<r<1$ contains $\overline{D_0}$ and is compactly contained in $D$; the dotted circle has radius $\delta\sqrt2$ and passes through the corners of $\overline{D_0}$. Right: at $\delta=1/\sqrt2$, the closed unit disk is the unique ellipse satisfying $[-\delta,\delta]^2\subseteq E\subseteq[-1,1]^2$; it passes through the four corners of the inner square and meets $\partial D$ at the four side midpoints, so it is not compactly contained in $D$ (Remark~\ref{rem:endpoint}).}
\label{fig:squares}
\end{figure}

The uniqueness conclusion also holds with support in the closed outer square and data on lines meeting the closed inner square.

\begin{corollary}[Support in the closed outer square]\label{cor:closed}
Let $\delta>1/\sqrt2$. Then the only $u\in L^1(\R^2)$ with $\supp u\subseteq[-1,1]^2$ whose Radon transform vanishes almost everywhere on the set of lines meeting $[-\delta,\delta]^2$ is $u=0$.
\end{corollary}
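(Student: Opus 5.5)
Suppose, for contradiction, that $u\ne0$ is an $L^1$ function with $\supp u\subseteq[-1,1]^2$ whose Radon transform vanishes almost everywhere on lines meeting $[-\delta,\delta]^2$. We apply Proposition~\ref{prop:ellipse} directly, rather than Theorem~\ref{thm:main}. The proposition does not require compact containment. Take $K_0=[-\delta,\delta]^2$, which is convex with nonempty interior, and $K_1=[-1,1]^2$, which is compact and convex, with $v=u$.

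The hypothesis of the proposition concerns lines meeting $K_0$ itself. In our setting this is exactly the closed square, so the vanishing assumption matches \eqref{eq:necessary-vanishing} verbatim. The proposition then yields an ellipse $E=c+Q^{1/2}\overline{\mathbb D_1}$ with
\[
 [-\delta,\delta]^2\subseteq E\subseteq[-1,1]^2 .
\]

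It remains to rule out such an ellipse by repeating the trace argument of Corollary~\ref{cor:squares}, now with non-strict inequalities.

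\emph{Outer bound.} Projecting onto the coordinate axes and using \eqref{eq:support-inclusion} gives $|c_i|+\sqrt{Q_{ii}}\le1$. Hence $Q_{ii}\le1$ for $i=1,2$, and so $\tr Q\le2$.

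\emph{Inner bound.} The projection of $[-\delta,\delta]^2$ onto $\omega_\pm=(1,\pm1)/\sqrt2$ is $[-\delta\sqrt2,\delta\sqrt2]$. Containment in the projection of $E$ gives
\[
 (\omega_\pm^{\mathsf T}Q\omega_\pm)^{1/2}\ge\delta\sqrt2+|c\cdot\omega_\pm|\ge\delta\sqrt2 .
\]
Summing over the orthonormal basis $\omega_+,\omega_-$ gives $\tr Q\ge4\delta^2$.

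\emph{Contradiction.} Since $\delta>1/\sqrt2$, we have $4\delta^2>2\ge\tr Q$, which is impossible. Therefore $u=0$.

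The only point needing care is that Proposition~\ref{prop:ellipse} is genuinely stated for closed $K_0$ and for a non-strict outer inclusion $E\subseteq K_1$. Both are true as written: the proof first replaces $K_0$ by its interior without changing the set of lines met almost everywhere, and it produces $E\subseteq K_1$ rather than $E\Subset D$. One small check is that lines meeting $\overline{K_0}$ but not its interior are tangent lines. These form a null set in $(\omega,p)$, so the almost-everywhere hypothesis transfers. The strictness $\delta>1/\sqrt2$ is what compensates for the loss of strict inequalities on the outside.
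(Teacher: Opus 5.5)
Your proposal is correct and follows the paper's proof essentially verbatim: you apply Proposition~\ref{prop:ellipse} to get an ellipse with $[-\delta,\delta]^2\subseteq E\subseteq[-1,1]^2$, and then the diagonal directions give $\tr Q\ge4\delta^2>2$, contradicting $\tr Q\le2$ from the outer inclusion. Taking $K_1=[-1,1]^2$ instead of the paper's $\co(\supp u)$ makes no difference, and your closing null-set check is harmless but unnecessary, since passing from $K_0$ to its interior only shrinks the set of lines on which vanishing is required.
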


\begin{proof}
Suppose $u\ne0$. The set
\[
 K_1=\co(\supp u)\subseteq[-1,1]^2
\]
is compact and convex. Proposition~\ref{prop:ellipse}, with $K_0=[-\delta,\delta]^2$, gives an ellipse
\[
 [-\delta,\delta]^2\subseteq E=c+Q^{1/2}\overline{\mathbb D_1}
 \subseteq[-1,1]^2.
\]
The outer inclusion gives
\[
 \sqrt{Q_{ii}}\le1-|c_i|,
 \qquad i=1,2,
\]
and hence $\tr Q\le2$. The two diagonal directions used in Corollary~\ref{cor:squares} give $\tr Q\ge4\delta^2>2$, a contradiction.
\end{proof}

\begin{remark}[The endpoint]\label{rem:endpoint}
At $\delta=1/\sqrt2$, the closed-support argument forces equality throughout. Indeed,
\[
 \tr Q\le(1-|c_1|)^2+(1-|c_2|)^2\le2,
\]
while the two diagonal directions give $\tr Q\ge2$. Hence $c=0$ and $Q_{11}=Q_{22}=1$. The inequalities
\[
 \omega_\pm^{\mathsf T}Q\omega_\pm=1\pm Q_{12}\ge1
\]
then force $Q_{12}=0$. Thus the closed unit disk is the unique ellipse satisfying
\[
 [-1/\sqrt2,1/\sqrt2]^2\subseteq E\subseteq[-1,1]^2.
\]
See Figure~\ref{fig:squares} (right). Proposition~\ref{prop:sufficiency} does not apply because the disk is not contained in the open square. We are not aware of a nonzero $u\in L^1(\R^2)$ with $\supp u\subseteq[-1,1]^2$ whose Radon transform vanishes almost everywhere on the set of lines meeting $[-1/\sqrt2,1/\sqrt2]^2$. By contrast, Theorem~\ref{thm:main} settles the compact-support-in-the-open-square problem at the endpoint in the negative.
\end{remark}

\begin{remark}[Relation to earlier claims]\label{rem:boman}
Conjecture~1.2 of \cite{BomanEllipsoidII} asserts that, for every bounded convex planar domain $D$ and every closed set $K\Subset D$, there is a nonzero smooth function $f$ with $\supp f\subseteq\overline D$ whose Radon transform vanishes on every line meeting $K$. Taking $D=(-1,1)^2$ and $K=[-\delta,\delta]^2$ with $1/\sqrt2<\delta<1$, Corollary~\ref{cor:closed} shows that the conjecture is false. Theorem~40.1 of \cite{BomanIPMS} asserts that condition~(i) of Theorem~\ref{thm:main} holds for every bounded pair $D_0\Subset D$ satisfying its remaining geometric hypotheses; taking $D_0=(-\delta,\delta)^2$ and the same $D$ gives a counterexample by Corollary~\ref{cor:squares}. In the proof of~(B) \cite[pp.~330--332]{BomanIPMS}, $b>0$ is fixed before $M_1=M_1(b)$ is chosen, and only then is a minimizer $u_0=u_b$ for $\mu_{M_1(b)}$ selected. The resulting bound by $\sqrt b$ therefore concerns a $b$-dependent family. It implies at most that $\mu_M\to0$ as $M\to\infty$; it does not establish the asserted eventual equality $\mu_M=0$, nor produce a fixed admissible function with vanishing restricted transform.
\end{remark}

\section{Further questions}

First, arbitrary closed inner sets can exhibit nonuniqueness for elementary reasons unrelated to an ellipse sandwich. If the inner set is a single point $\{x_*\}\subset D$, any nonzero smooth function $g$ supported in a sufficiently small ball centered at $x_*$ and satisfying
\[
 g(2x_*-x)=-g(x)
\]
has zero integral over every line through $x_*$. Beyond Remark~\ref{rem:connected}, we are not aware of a geometric characterization for general closed or disconnected data sets.

Second, higher dimensions require a different formulation. For $n\ge2$, define the hyperplane Radon transform and its backprojection by
\[
 \Rad_nu(\omega,p)
 =\int_{\{x\in\R^n:x\cdot\omega=p\}}u(x)\,d\mathscr H^{n-1}(x),
 \qquad
 \Rad_n^*\varphi(x)=\int_{\mathbb S^{n-1}}\varphi(\omega,x\cdot\omega)\,d\omega,
\]
where $\mathscr H^{n-1}$ is $(n-1)$-dimensional Hausdorff measure and $d\omega$ is surface measure on $\mathbb S^{n-1}$; in particular, $\Rad_2=\Rad$. If $n\ge3$ is odd, the inversion formula is local \cite{Helgason,Natterer}:
\[
 u(x)=\kappa_n(-\Delta)^{(n-1)/2}\Rad_n^*\Rad_nu(x),
\]
where $\Delta$ is the Laplacian on $\R^n$ and $\kappa_n\ne0$ depends on the measure normalizations. Consequently, if $u\in C_c^\infty(\R^n)$ and $\Rad_nu$ vanishes on every hyperplane meeting the open set $D_0$, then $\Rad_n^*\Rad_nu=0$ on $D_0$, because every hyperplane through a point of $D_0$ meets $D_0$. The inversion formula therefore gives $u=0$ in $D_0$. Thus the planar statement about nonuniqueness of $u|_{D_0}$ has no direct analogue in odd dimensions. One may nevertheless ask whether there exists a nonzero $u\in C_c^\infty(D)$ whose Radon transform vanishes on every hyperplane meeting $D_0$.

The Helgason--Ludwig moment conditions used in \eqref{eq:radon-moment-general} continue to hold in higher dimensions, but the present proof relies essentially on the one-dimensional unoriented-line parameter $q\in\T$ and the Wold model forced by the relation $z=q\overline z$. A higher-dimensional replacement would have to encode the geometry of $\mathbb S^{n-1}/\{\pm1\}$ and does not follow directly from the argument above.

\section*{Acknowledgments}
The author thanks Jan Boman for extensive discussions about the interior Radon problem and for comments on earlier versions of this manuscript.

\end{document}